\setlist[enumerate,1]{leftmargin=*,label=\upshape{(\arabic*)}}
\numberwithin{equation}{section}
\theoremstyle{plain}
\newtheorem{thm}{Theorem}[section]
\newtheorem{prop}[thm]{Proposition}
\newtheorem{cor}[thm]{Corollary}
\newtheorem{lem}[thm]{Lemma}
\theoremstyle{definition}
\newtheorem{exm}[thm]{Example}
\theoremstyle{remark}
\DeclareMathOperator{\Hom}{Hom}
\DeclareMathOperator{\Ext}{Ext}
\DeclareMathOperator{\End}{End}
\DeclareMathOperator{\Ker}{Ker}
\DeclareMathOperator{\Coker}{Coker}
\renewcommand{\Im}{\operatorname{Im}}
\renewcommand{\top}{\operatorname{top}}
\DeclareMathOperator{\soc}{soc}
\DeclareMathOperator{\syz}{\Omega}
\DeclareMathOperator{\Tr}{Tr}
\DeclareMathOperator{\add}{add}
\renewcommand{\-}{\mbox{-}}
\renewcommand{\b}{\mathrm{b}}
\newcommand{\To}{\xrightarrow}
\newcommand{\sg}{\mathrm{sg}}
\newcommand{\op}{\mathrm{op}}
\newcommand{\K}{\mathbf{K}}
\newcommand{\D}{\mathbf{D}}
\newcommand{\perf}{\mathbf{perf}}
\newcommand{\thick}{\mathrm{thick}}
\renewcommand{\mod}{\mathrm{mod}}
\newcommand{\proj}{\mathrm{proj}}
\newcommand{\inj}{\mathrm{inj}}
\newcommand{\LA}{\mathbb{A}}
\newcommand{\Z}{\mathds{Z}}
\renewcommand{\AA}{\mathcal{A}}
\renewcommand{\SS}{\mathcal{S}}
\newcommand{\EE}{\mathcal{E}}
\newcommand{\FF}{\mathcal{F}}
\newcommand{\PP}{\mathcal{P}}
\newcommand{\WW}{\mathcal{W}}
\newcommand{\XX}{\mathcal{X}}
\begin{document}
\title{The singularity category of a Nakayama algebra}
\keywords{Nakayama algebra, Resolution quiver, Singularity category}
\thanks{Supported by the National Natural Science Foundation of China (No. 11201446).}
\subjclass[2010]{Primary 16G10; Secondary 16D90, 18E30}
\author{Dawei Shen}
\address{School of Mathematical Sciences \\
         University of Science and Technology of China \\
         Hefei, Anhui 230026 \\
         P. R. China}
\email{sdw12345@mail.ustc.edu.cn}
\urladdr{http://home.ustc.edu.cn/~sdw12345/}
\date{\today}
\begin{abstract}
  Let $A$ be a Nakayama algebra. We give a description of the singularity category of $A$ inside its stable module category. We prove that there is a duality between the singularity category of $A$ and the singularity category of its opposite algebra. As a consequence, the resolution quiver of $A$ and the resolution quiver of its opposite algebra have the same number of cycles and the same number of cyclic vertices.
\end{abstract}
\maketitle

\section{Introduction}
Let $A$ be an artin algebra. Denote by $A\-\mod$ the category of finitely generated left $A$-modules, and by $\D^\b(A\-\mod)$ the bounded derived category of $A\-\mod$. Recall that a complex in $\D^\b(A\-\mod)$ is \emph{perfect} provided that it is isomorphic to a bounded complex of finitely generated projective $A$-modules. Following \cite{Buc1987, Hap1991, Orl2004}, the \emph{singularity category} $\D_\sg(A)$ of $A$ is the quotient triangulated category of $\D^\b(A\-\mod)$ with respect to the full subcategory consisting of perfect complexes. Recently, the singularity category of a Nakayama algebra is described in \cite{CY2013}.

Let $A$ be a connected Nakayama algebra without simple projective modules. Following \cite{Rin2013}, the \emph{resolution quiver} $R(A)$ of $A$ is defined as follows: the vertex set is the set of isomorphism classes of simple $A$-modules, and there is an arrow from $S$ to $\tau \soc P(S)$ for each simple $A$-module $S$; see also \cite{Gus1985}. Here, $P(S)$ is the projective cover of $S$, `soc' denotes the socle of a module, and $\tau = D \Tr$ is the Auslander-Reiten translation \cite{ARS1995}. A simple $A$-module is called \emph{cyclic} provided that it lies in a cycle of $R(A)$.

The following consideration is inspired by \cite{Rin2013}. Let $A$ be a connected Nakayama algebra of infinite global dimension. Let $\SS_c$ be a complete set of pairwise non-isomorphic cyclic simple $A$-modules. Let $\XX_c$ be the set formed by indecomposable $A$-modules $X$ such that $\top X$ and $\tau\soc X$ both belong to $\SS_c$. Here, `top' denotes the top of a module. Denote by $\FF$ the full subcategory of $A\-\mod$ whose objects are finite direct sums of objects in $\XX_c$. It turns out that $\FF$ is a Frobenius abelian category, and it is equivalent to $A'\-\mod$ with $A'$ a connected selfinjective Nakayama algebra. Denote by $\underline{\FF}$ the stable category of $\FF$ modulo projective objects; it is a triangulated category by \cite{Hap1988}. We emphasize that the stable category $\underline{\FF}$ is a full subcategory of the stable module category $A\-\underline{\mod}$ of $A$.

The well-known result of \cite{Buc1987, Hap1991} describes the singularity category of a Gorenstein algebra $A$ via the subcategory of $A\-\underline\mod$ formed by Gorenstein projective modules. Here, we recall that an artin algebra is \emph{Gorenstein} if the injective dimension of the regular module is finite on both sides. In general, a Nakayama algebra is not Gorenstein \cite{Rin2013, CY2013}. The following result describes the singularity category of a Nakayama algebra via the subcategory $\underline{\FF}$ of $A\-\underline{\mod}$. For a Gorenstein Nakayama algebra, these two descriptions coincide; compare \cite{Rin2013}.

\begin{thm}
Let $A$ be a connected Nakayama algebra of infinite global dimension. Then
the singularity category $\D_\sg(A)$ and the stable category $\underline{\FF}$ are triangle equivalent.
\end{thm}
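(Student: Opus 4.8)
The plan is to construct an explicit triangle functor $F\colon \underline{\FF} \to \D_\sg(A)$ and prove it is an equivalence. For the construction, recall that the composite $A\-\mod \to \D^\b(A\-\mod) \to \D_\sg(A)$, sending a module to its stalk complex in degree $0$, annihilates every perfect complex. By hypothesis $\FF$ is a Frobenius category, and its projective objects are projective $A$-modules, namely the $P(S)$ with $S$ cyclic: these lie in $\XX_c$ because the arrow $S \to \tau\soc P(S)$ of $R(A)$ keeps us among cyclic vertices, so $\top P(S)=S$ and $\tau\soc P(S)$ are both cyclic. Since such modules are perfect, the restriction of the composite above to $\FF$ factors through $\underline{\FF}$, giving $F$. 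To see that $F$ is a triangle functor I would check that it carries the suspension $\Omega^{-1}$ of $\underline{\FF}$ to the shift $[1]$ of $\D_\sg(A)$: for $X \in \FF$ the $A$-projective cover of $X$ is also its projective cover in $\FF$ and $\Omega X$ again lies in $\FF$, so the defining sequence $0 \to \Omega X \to P(X) \to X \to 0$ is at once a short exact sequence in $\FF$ and one of $A$-modules; the resulting triangle in $\D_\sg(A)$, together with the vanishing of $P(X)$, identifies $\Omega X$ with $X[-1]$.

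Next I would establish essential surjectivity. Every object of $\D_\sg(A)$ is isomorphic to a shift $M[i]$ of a module $M$, and in $\D_\sg(A)$ one has $M[i] \cong (\Omega^{n}M)[i-n]$ for all $n \ge 0$ by iterating the identification of the previous paragraph. Hence it suffices to prove the combinatorial statement that for every indecomposable $A$-module $M$ the high syzygy $\Omega^{n}M$ lies in $\XX_c$ once $n$ is large. This is where the dynamics of the resolution quiver enter: taking syzygies drives the top of $\Omega^{n}M$ along the map $S \mapsto \tau\soc P(S)$, so that both $\top \Omega^{n}M$ and $\tau\soc \Omega^{n}M$ become cyclic after finitely many steps. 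Choosing such an $n$ exhibits $M[i] \cong (\Omega^{n}M)[i-n]$ with $\Omega^{n}M \in \FF$, which lies in the image of $F$.

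The main obstacle is full faithfulness. Here I would use the standard colimit description $\Hom_{\D_\sg(A)}(X,Y) \cong \varinjlim_n \underline{\Hom}_A(\Omega^n X, \Omega^n Y)$ valid for modules $X,Y$. For $X, Y \in \FF$ the syzygies computed over $A$ agree with those computed in the Frobenius category $\FF$, and since $\Omega$ is an autoequivalence of $\underline{\FF}$ the transition maps of the system become isomorphisms as soon as one works with stable homomorphisms over $\FF$ rather than over $A$. Thus everything reduces to the comparison $\underline{\Hom}_A(W,W') = \underline{\Hom}_{\FF}(W,W')$ for $W, W' \in \FF$, that is: any $A$-morphism between objects of $\FF$ which factors through a projective $A$-module must already factor through a projective object of $\FF$. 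I expect this to be the hardest and most delicate point, requiring the explicit uniserial structure of Nakayama modules to show that the composition factors arising in such a factorization are forced to be cyclic, so that the non-cyclic projective summands contribute nothing in the stable category. Granting this comparison, the colimit collapses to $\underline{\Hom}_{\FF}(X,Y) = \Hom_{\underline{\FF}}(X,Y)$, so $F$ is fully faithful; combined with essential surjectivity this makes $F$ a triangle equivalence.
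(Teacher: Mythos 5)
Your proposal follows essentially the same architecture as the paper's proof: construct the functor from the composite $\FF \to A\-\mod \to \D_\sg(A)$, which kills the projectives $\add\PP_c$ of $\FF$; use the Keller--Vossieck colimit formula $\Hom_{\D_\sg(A)}(q(M),q(N)) \simeq \varinjlim_n \underline{\Hom}_A(\syz^n M,\syz^n N)$ together with the fact that $\syz$ is an autoequivalence of $\underline{\FF}$ for full faithfulness; and get density from the fact that high syzygies of arbitrary modules land in $\XX_c$, the image being closed under shifts. However, there is one genuine gap: the step you defer with ``granting this comparison,'' namely that $\underline{\Hom}_A(W,W') = \underline{\Hom}_{\FF}(W,W')$ for $W,W'\in\FF$, i.e.\ that an $A$-morphism between objects of $\FF$ factoring through a projective $A$-module already factors through a projective object of $\FF$. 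You single this out as the hardest point and predict it needs a delicate analysis of composition factors in a uniserial factorization; it is left unproven, and that prediction aims in the wrong direction.

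In fact the comparison is immediate from a fact you already invoke in your first paragraph: $\FF$ is closed under projective covers. If $f\colon W\to W'$ factors as $W \To{g} P \To{h} W'$ with $P$ a projective $A$-module, lift $h$ along the projective cover $\pi\colon P(W')\to W'$ (possible since $P$ is projective and $\pi$ is surjective), so that $f = \pi\circ(h'\circ g)$ factors through $P(W')$; since $\top W'$ is a sum of cyclic simples, $P(W')$ lies in $\add\PP_c$, hence is a projective object of $\FF$. Thus the two stabilizing ideals coincide on $\FF$, which is exactly the paper's (implicit) reason why $\underline{\FF}\to A\-\underline{\mod}$ is fully faithful; with this your full-faithfulness argument closes. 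Two further slips are harmless but worth noting: the shift identity should read $M[i]\simeq(\syz^n M)[i+n]$, not $[i-n]$, since as you correctly observe $\syz X\simeq X[-1]$ in $\D_\sg(A)$; and the top of syzygies moves along $\gamma$ only every two steps ($\top\syz^{2m}M=\gamma^m(\top M)$, the paper's Lemma~\ref{lem2.6}), not at every syzygy. Finally, your object-level identification $F(\syz X)\simeq F(X)[-1]$ is the right idea for showing $F$ is a triangle functor, but by itself it does not give naturality or preservation of triangles; the paper settles this by citing the $\partial$-functor machinery, and your write-up should at least acknowledge that extra verification.
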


Denote by $A\-\inj$ the category of finitely generated injective $A$-modules, and by $\K^\b(A\-\inj)$ the bounded homotopy category of $A\-\inj$. We view $\K^\b(A\-\inj)$ as a thick subcategory of $\D^\b(A\-\mod)$ via the canonical functor. Then the quotient triangulated category $\D^\b(A\-\mod)/\K^\b(A\-\inj)$ is triangle equivalent to the opposite category of the singularity category $\D_\sg(A^\op)$ of $A^\op$. Here, $A^\op$ is the opposite algebra of $A$. In general, it seems that for an arbitrary artin algebra $A$, there is no obvious relation between $\D_\sg(A)$ and $\D_\sg(A^\op)$. However, we have the following result for a Nakayama algebra.

\begin{prop}
Let $A$ be a Nakayama algebra. Then the singularity category $\D_\sg(A)$ is triangle equivalent to $\D^\b(A\-\mod)/\K^\b(A\-\inj)$. Equivalently, there is a triangle duality between $\D_\sg(A)$ and $\D_\sg(A^\op)$.
\end{prop}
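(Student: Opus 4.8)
The plan is to combine the triangle equivalence $\D_\sg(A)\simeq\underline{\FF}$ of the preceding theorem with the standard duality $D=\Hom(-,E)$, applied to both $A$ and $A^\op$. First I record the reduction. As noted just before the statement, $D$ is an exact contravariant equivalence $A\-\mod\to A^\op\-\mod$ carrying projectives to injectives, whence it identifies $\D^\b(A\-\mod)/\K^\b(A\-\inj)$ with $\D_\sg(A^\op)^{\op}$; this is exactly what makes the two formulations of the proposition equivalent, so it suffices to prove the triangle equivalence $\D_\sg(A)\simeq\D^\b(A\-\mod)/\K^\b(A\-\inj)$. Since the singularity category and the quotient by $\K^\b(A\-\inj)$ both decompose over blocks, I may assume $A$ connected; and I may assume $A$ of infinite global dimension, for if $\mathrm{gldim}\,A<\infty$ then every module has finite injective dimension and both quotients vanish. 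Finally $\mathrm{gldim}\,A=\mathrm{gldim}\,A^\op$, so $A^\op$ is again connected of infinite global dimension and the theorem applies verbatim to it.

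Writing $\FF(A)$ and $\FF(A^\op)$ for the Frobenius subcategories furnished by the theorem for $A$ and for $A^\op$, the reduction and the theorem yield the chain
\[
\D^\b(A\-\mod)/\K^\b(A\-\inj)\;\simeq\;\D_\sg(A^\op)^{\op}\;\simeq\;\underline{\FF(A^\op)}^{\,\op}\;\simeq\;\underline{D\,\FF(A^\op)},
\]
where the last equivalence is induced by the exact duality $D\colon A^\op\-\mod\to A\-\mod$, which restricts to a duality between the Frobenius category $\FF(A^\op)$ and its image $D\,\FF(A^\op)\subseteq A\-\mod$, hence to a triangle equivalence of stable categories. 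Comparing with $\D_\sg(A)\simeq\underline{\FF(A)}$, the proposition is reduced to the identification of subcategories of $A\-\mod$,
\[
D\,\FF(A^\op)=\FF(A),
\]
and since $\FF$ is the additive closure of its indecomposables it suffices to show $D\,\XX_c(A^\op)=\XX_c(A)$.

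To match the indecomposables I would trace the defining conditions through $D$. For an indecomposable $W$ over $A^\op$ one has $\soc(DW)=D(\top W)$ and $\top(DW)=D(\soc W)$, while $D$ intertwines the Auslander--Reiten translations, $D\tau_{A^\op}\cong\tau^{-1}D$; so the condition ``$\top W,\ \tau_{A^\op}\soc W\in\SS_c(A^\op)$'' defining $\XX_c(A^\op)$ becomes, for $X=DW$, the condition ``$\soc X\in D\,\SS_c(A^\op)$ and $\tau^{-1}\top X\in D\,\SS_c(A^\op)$''. As the condition defining $\XX_c(A)$ reads equivalently ``$\top X\in\SS_c(A)$ and $\soc X\in\tau^{-1}\SS_c(A)$'', the equality $D\,\XX_c(A^\op)=\XX_c(A)$ comes down to the single combinatorial identity
\[
D\,\SS_c(A^\op)=\tau^{-1}\SS_c(A).
\]
Via $D$ the left-hand side is the set of cyclic vertices of the ``co-resolution'' quiver of $A$, whose arrows run $S\to\tau^{-1}\top I(S)$ with $I(S)$ the injective envelope of $S$; thus the identity asserts that the cyclic vertices of the resolution quiver and of the co-resolution quiver of $A$ correspond under $\tau$. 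This is the heart of the matter and the step I expect to be the genuine obstacle: in contrast to the formal manipulations above it is a real statement about the Kupisch series, and the naive hope that the maps $S\mapsto\tau\soc P(S)$ and $S\mapsto\tau^{-1}\top I(S)$ are conjugate by $\tau$ already breaks down on non-cyclic vertices. I would instead prove it by a direct analysis of the uniserial structure, in the spirit of Ringel's study of the resolution quiver, showing that the two maps agree on and stabilise to the same cyclic part. Granting this identity the displayed chain is an honest sequence of triangle equivalences; triangulatedness of each step is automatic, since every category in sight is the stable category of a Frobenius category, the exact structures correspond under the exact functor $D$, and a contravariant equivalence of Frobenius categories induces a triangle duality of stable categories.
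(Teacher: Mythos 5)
Your reductions (to $A$ connected of infinite global dimension) and the reformulation via the duality $D$ agree with the paper, and the chain
$\D^\b(A\-\mod)/\K^\b(A\-\inj)\simeq\D_\sg(A^\op)^{\op}\simeq\underline{\FF(A^\op)}^{\,\op}\simeq\underline{D\,\FF(A^\op)}$
is sound. The fatal problem is exactly the step you flag as the crux: the identity $D\,\SS_c(A^\op)=\tau^{-1}\SS_c(A)$ (equivalently $D\,\XX_c(A^\op)=\XX_c(A)$) is not merely hard to prove --- it is false. By Lemma 2.5, $\SS_c(A)$ consists of the simples of infinite \emph{injective} dimension, so $D\,\SS_c(A^\op)$ consists of the simple $A$-modules of infinite \emph{projective} dimension, and for a non-Gorenstein Nakayama algebra these two sets need not agree under any power of $\tau$. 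The paper's own Example in Section 5 (admissible sequence $(7,6,6,5)$) is a counterexample: there $\SS_c(A)=\{S_1,S_4\}$, while the simples of infinite projective dimension are $\{S_2,S_4\}$ (indeed $\rad P_1\cong P_2$ and $\rad P_3\cong P_4$, so $S_1,S_3$ have projective dimension one, whereas $\syz^4(S_4)\cong S_4$ is periodic). Since $S_1$ and $S_4$ are adjacent in the $\tau$-orbit while $S_2$ and $S_4$ are antipodal, no rotation $\tau^k$ carries $\{S_2,S_4\}$ onto $\{S_1,S_4\}$; consequently $D\,\SS_c(A^\op)\neq\tau^{-1}\SS_c(A)$, and for instance $S_4$ lies in $\XX_c(A)$ but not in $D\,\XX_c(A^\op)$. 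So $D\,\FF(A^\op)$ and $\FF(A)$ are genuinely different subcategories of $A\-\mod$, and the proposal cannot be completed along these lines.

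The conceptual point is that the duality between $\D_\sg(A)$ and $\D_\sg(A^\op)$ is \emph{not} induced by the module duality $D$ matching the two Frobenius subcategories; such a matching is an extra symmetry that fails beyond the Gorenstein case. The paper's proof avoids comparing $A$ with $A^\op$ at the module level altogether: it stays on the side of $A$, uses Rickard's theorem to identify $\underline{\FF}$ with $\D^\b(\FF)/\K^\b(\add\PP_c)$, and then proves (Lemma 4.1) that the exact right adjoint $i_\rho=\Hom_A(P,-)$ of the inclusion $i:\FF\to A\-\mod$ induces a triangle equivalence $\D^\b(A\-\mod)/\K^\b(A\-\inj)\simeq\D^\b(\FF)/\K^\b(\add\PP_c)$. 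The asymmetric inputs that make this work are precisely what your rotation identity contradicts: the noncyclic simples, which span $\Ker i_\rho^*$, and the projective objects $\PP_c$ of $\FF$ all have finite \emph{injective} dimension (Lemmas 2.5 and 3.5(1)), so $\K^\b(A\-\inj)$ corresponds to $\K^\b(\add\PP_c)$ under $i_\rho^*$. To salvage your approach you would have to show directly that the two distinct Frobenius subcategories $\FF(A)$ and $D\,\FF(A^\op)$ have triangle equivalent stable categories --- but that is essentially the proposition itself, so no shortcut is gained.
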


Let $A$ be a connected Nakayama algebra of infinite global dimension. Recall from \cite{Shen2014} that the resolution quivers $R(A)$ and $R(A^\op)$ have the same number of cyclic vertices. The following result strengthens the previous one by a different method.

\begin{prop}
Let $A$ be a connected Nakayama algebra of infinite global dimension. Then the resolution quivers $R(A)$ and $R(A^\op)$ have the same number of cycles and the same number of cyclic vertices.
\end{prop}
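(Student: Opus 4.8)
The plan is to combine the Theorem with the duality of the preceding Proposition and then to read off two numerical invariants from the stable categories involved. By the Theorem there is a triangle equivalence $\D_\sg(A)\simeq\underline{\FF}$ with $\FF\simeq B\-\mod$ for a selfinjective Nakayama algebra $B$; applying the Theorem to $A^\op$ gives likewise $\D_\sg(A^\op)\simeq B'\-\underline{\mod}$ for a selfinjective Nakayama algebra $B'$. The duality Proposition provides a triangle equivalence $\D_\sg(A)\simeq\D_\sg(A^\op)^\op$, so composing yields a triangle equivalence
\[
B\-\underline{\mod}\;\simeq\;\bigl(B'\-\underline{\mod}\bigr)^\op .
\]
By the construction of $\FF$, the blocks of $B$ correspond bijectively to the cycles of $R(A)$ and the simple $B$-modules correspond bijectively to the cyclic vertices of $R(A)$, and similarly for $B'$ and $R(A^\op)$. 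Hence it suffices to prove that the number of blocks and the number of simple modules of a selfinjective Nakayama algebra are invariants of its stable module category, unchanged under triangle equivalence and under passage to the opposite category.

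For the number of blocks, I would use that the stable module category of a selfinjective algebra is a Hom-finite Krull--Schmidt triangulated category, so it decomposes uniquely into a product of connected (indecomposable) triangulated summands, and this number of summands is preserved by any triangle equivalence. Since $(\prod_i\mathcal{T}_i)^\op=\prod_i\mathcal{T}_i^\op$ and $\mathcal{T}_i^\op$ is connected exactly when $\mathcal{T}_i$ is, the number of connected components is also unchanged by $(-)^\op$. Reading this invariant off both sides of the displayed equivalence gives that $B$ and $B'$ have the same number of blocks, that is, $R(A)$ and $R(A^\op)$ have the same number of cycles.

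For the number of simple modules I would argue block by block, so assume $B$ is connected with $n$ simple modules and Loewy length $\ell\ge 2$. The indecomposable non-projective $B$-modules are the uniserial modules $M_{i,j}$ with $i\in\Z/n$ and $1\le j\le\ell-1$, and the stable Auslander--Reiten translate acts by $\tau M_{i,j}=M_{i-1,j}$; thus the stable Auslander--Reiten quiver is a tube of rank $n$, and $n$ is the minimal period of $\tau$. Now $B\-\underline{\mod}$ carries a Serre functor, so it admits Auslander--Reiten triangles and $\tau$ is recovered from the triangulated structure as the Serre functor composed with $[-1]$; consequently any triangle equivalence intertwines the two translates and preserves the minimal period $n$. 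Passing to the opposite category replaces the Serre functor by its inverse and $[-1]$ by $[1]$, hence replaces $\tau$ by $\tau^{-1}$, which has the same minimal period. Applying this to each connected summand of the displayed equivalence shows that the multiset of tube ranks of $B$ equals that of $B'$; summing over the blocks, $B$ and $B'$ have the same number of simple modules, i.e.\ $R(A)$ and $R(A^\op)$ have the same number of cyclic vertices.

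The main obstacle is this last step: showing that the number $n$ of simple modules of a connected selfinjective Nakayama algebra is genuinely intrinsic to its stable module category as a triangulated category, and stable under $(-)^\op$. This rests on the Auslander--Reiten analysis identifying the stable category with a tube of rank $n$, together with the fact that the translate $\tau$, being expressible through the Serre functor, is transported by any triangle equivalence and merely inverted by $(-)^\op$. The remaining bookkeeping, matching the blocks and simple modules of $B$ with the cycles and cyclic vertices of $R(A)$, is routine given the construction of $\FF$ furnished by the Theorem.
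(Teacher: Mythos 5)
There is a genuine gap in the first half of your argument: the claimed bijection between the blocks of $B$ and the cycles of $R(A)$ is false. By Proposition~\ref{prop3.7} --- whose proof rests on Corollary~\ref{cor3.6} --- the algebra $B$ is always \emph{connected}: all elementary modules lie in a single $\delta$-orbit, where $\delta(S)=\tau\soc E(S)$, and it is the $\delta$-orbits that govern the block decomposition of $B$. So $B$ has exactly one block, no matter how many cycles $R(A)$ has. The cycles of $R(A)$ are instead the orbits on $\SS_c$ of the different map $\gamma(S)=\tau\soc P(S)$, and $\gamma\neq\delta$ in general. Concretely, let $A$ be the connected selfinjective Nakayama algebra with two simples and radical square zero. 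Then every simple module is cyclic, every elementary module is simple, $\FF=A\-\mod$ and $B\simeq A$ is connected; here $\delta(S_i)=\tau S_i=S_{i+1}$ gives one orbit, while $\gamma(S_i)=\tau^{2}S_i=S_i$ gives two loops in $R(A)$. Thus $R(A)$ has two cycles but $B$ has one block: the invariant your component-counting extracts is identically $1$ and says nothing about $c(A)$.

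The second half of your argument (cyclic vertices) is essentially correct and is in substance the paper's own: the number of cyclic vertices equals the number of simple $B$-modules, i.e.\ the rank $t$ of the tube, and this is preserved by triangle equivalence and by passage to the opposite category (the paper phrases this via Auslander--Reiten quivers: the duality of Proposition~\ref{prop4.2} sends the AR quiver of $\D_\sg(A)$, a truncated tube $\Z\LA_m/\langle\tau^t\rangle$, to its opposite quiver, which is again a truncated tube of the same rank $t$ and height $m$; your Serre-functor formulation of the same fact is fine). But the cycle count cannot be recovered from $t$ alone: the paper needs the height $m$ as a second invariant together with the nontrivial arithmetic identity $c(A)=(m+1,t)$ of Lemma~\ref{lem5.1}, whose proof reduces to the selfinjective case along the chain of homomorphisms of Chen--Ye using the preservation result of \cite{Shen2014}, where one computes $\gamma=\tau^{m+1}$ on the $t$ simples. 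To repair your proof you would have to observe that $m$ is likewise a triangle invariant unchanged by $(-)^\op$ (true, and immediate from the AR-quiver formulation) and then establish this formula; the block-counting shortcut cannot substitute for it.
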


The paper is organized as follows. In Section 2, we recall some facts on singularity categories of artin algebras and the simplification in the sense of \cite{Rin1976}. In Section 3, we introduce the Frobenius subcategory $\FF$ and prove Theorem 1.1. The proofs of Propositions 1.2 and 1.3 are given in Sections 4 and 5, respectively.

Throughout this paper, we fix a commutative artinian ring $R$. All categories, morphisms and functors are $R$-linear.

\section{Preliminaries}
We first recall some facts on the singularity category of an artin algebra.

Let $A$ be an artin algebra over $R$. Recall that $A\-\mod$ denotes the category of finitely generated left $A$-modules. Let $A\-\proj$ denote the full subcategory consisting of projective $A$-modules, and $A\-\inj$ the full subcategory consisting of injective $A$-modules. Denote by $A\-\underline{\mod}$ the projectively stable category of finitely generated $A$-modules; it is obtained from $A\-\mod$ by factoring out the ideal of all maps which factor through projective $A$-modules; see \cite[IV.1]{ARS1995}.

Recall that for an $A$-module $M$, its \emph{syzygy} $\syz(M)$ is the kernel of its projective cover $P(M) \to M$. This gives rise to the \emph{syzygy functor} $\syz: A\-\underline{\mod} \to A\-\underline{\mod}$. Let $\syz^0(M) = M$ and $\syz^{i+1}(M) = \syz(\syz^i(M))$ for $i \geq 0$. Denote by $\syz^i(A\-\mod)$ the full subcategory of $A\-\mod$ formed by modules $M$ such that there is an exact sequence $0 \to M \to P_{i-1} \to \cdots \to P_{1} \to P_0$ with each $P_j$ projective. We also denote by $\syz^i_0(A\-\mod)$ the full subcategory of $\syz^i(A\-\mod)$ formed by modules without indecomposable projective direct summands.

Recall that $\D^\b(A\-\mod)$ denotes the bounded derived category of $A\-\mod$, whose translation functor is denoted by $[1]$. For each integer $n$, let $[n]$ denote the $n$-th power of $[1]$. The category $A\-\mod$ is viewed as a full subcategory of $\D^\b(A\-\mod)$ by identifying an $A$-module with the corresponding stalk complex concentrated at degree zero. Recall that a complex in $\D^\b(A\-\mod)$ is \emph{perfect} provided that it is isomorphic to a bounded complex of finitely generated projective $A$-modules. Perfect complexes form a thick subcategory of $\D^\b(A\-\mod)$, which is denoted by $\perf(A)$. Here, we recall that a triangulated subcategory is \emph{thick} if it is closed under direct summands.

Following \cite{Buc1987, Hap1991, Orl2004}, the quotient triangulated category
\[\D_\sg(A) = \D^\b(A\-\mod)/\perf(A)\]
is called the \emph{singularity category} of $A$. Denote by $q: \D^\b(A\-\mod) \to \D_\sg(A)$ the quotient functor. We recall that the objects in $\D_\sg(A)$ are bounded complexes of finitely generated $A$-modules. The translation functor of $\D_\sg(A)$ is also denoted by $[1]$.

The following results are well known.

\begin{lem}[{\cite[Lemma~2.1]{Chen2011b}}]\label{lem2.1}
Let $X$ be a complex in $\D_\sg(A)$ and $n_0>0$. Then for any $n$ sufficiently large, there exists a module $M$ in $\syz^{n_0}(A\-\mod)$ such that $X \simeq q(M)[n]$.
\end{lem}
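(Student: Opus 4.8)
The plan is to prove this in two moves: first realize the arbitrary object $X$ as the shift of an honest module inside $\D_\sg(A)$, and then push that module arbitrarily far down the syzygy tower while compensating with the shift. The second move is cheap bookkeeping; the genuine content sits in the first move.

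For the first move I would represent $X$ by a bounded complex and choose a projective resolution $P^\bullet \to X$, that is, a bounded-above complex of finitely generated projectives quasi-isomorphic to $X$. Since $X$ has bounded cohomology, fix $t \geq 0$ with $H^i(X) = 0$ for $i < -t$; then the good truncation $\tau_{\geq -t} P^\bullet$ is still quasi-isomorphic to $X$ and has the shape $M \to P^{-t+1} \to \cdots \to P^s$ concentrated in degrees $-t, \ldots, s$, where $M = \Coker(P^{-t-1} \to P^{-t})$ and every term in degree $> -t$ is projective. I would then invoke the brutal truncation short exact sequence of this complex, whose subcomplex is the bounded complex of finitely generated projectives $P^{-t+1} \to \cdots \to P^s$ and whose quotient is the stalk complex $M[t]$ (that is, $M$ placed in degree $-t$). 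The subcomplex lies in $\perf(A)$ and hence vanishes in $\D_\sg(A)$, so the associated triangle collapses to give $X \simeq q(M)[t]$ with $M$ a genuine module.

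For the second move I would exploit the syzygy triangles. The short exact sequence $0 \to \syz M \to P(M) \to M \to 0$ yields a triangle in $\D^\b(A\-\mod)$, and since $P(M)$ is projective it is annihilated by $q$; hence $q(M) \simeq q(\syz M)[1]$, and by iteration $q(M) \simeq q(\syz^k M)[k]$ for every $k \geq 0$. By construction $\syz^k M$ lies in $\syz^k(A\-\mod)$, and truncating the initial segment of the defining exact sequence shows $\syz^k(A\-\mod) \subseteq \syz^{n_0}(A\-\mod)$ whenever $k \geq n_0$.

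Combining the two moves gives $X \simeq q(\syz^k M)[k+t]$ for all $k \geq 0$. Given any $n \geq t + n_0$, setting $k = n - t$ produces a module $\syz^{n-t} M$ lying in $\syz^{n_0}(A\-\mod)$ with $X \simeq q(\syz^{n-t} M)[n]$, which is precisely the assertion for all sufficiently large $n$. The one delicate point is the first move, namely verifying that the good-then-brutal truncation legitimately trades the complex for a single module up to a shift in $\D_\sg(A)$; everything in the second move is routine once the syzygy triangle is in hand.
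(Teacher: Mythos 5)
Your proof is correct. The paper gives no argument of its own for this lemma (it is quoted from \cite[Lemma~2.1]{Chen2011b}), and your proof is essentially the standard one used there: resolve $X$ by a bounded-above complex of finitely generated projectives, use good truncation plus the brutal-truncation short exact sequence to identify $X$ with a shifted module $q(M)[t]$ in $\D_\sg(A)$, and then apply the syzygy triangles $0 \to \syz M \to P(M) \to M \to 0$ to replace $M$ by $\syz^{n-t}(M) \in \syz^{n_0}(A\-\mod)$ for all $n \geq t+n_0$.
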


\begin{lem}[{\cite[Lemma~2.2]{Chen2011b}}]\label{lem2.2}
Let $0 \to N \to P_{n-1} \to \cdots \to P_0 \to M \to 0$ be an exact sequence in $A\-\mod$ with each $P_i$ projective. Then there is an isomorphism $q(M) \simeq q(N)[n]$ in $\D_\sg(A)$. In particular, there is a natural isomorphism $\theta_M^n: q(M) \simeq q(\syz^n(M))[n]$ for any $M$ in $A\-\mod$ and $n \geq 0$.
\end{lem}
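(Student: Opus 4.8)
The plan is to reduce everything to the case $n = 1$, where the statement follows directly from the interplay between short exact sequences and distinguished triangles. Given a short exact sequence $0 \to N \to P \to M \to 0$ with $P$ projective, it induces a distinguished triangle $N \to P \to M \to N[1]$ in $\D^\b(A\-\mod)$. Since $P$ is projective, it is a perfect complex, so $q(P) = 0$ in $\D_\sg(A)$. Applying the quotient functor $q$ thus yields a distinguished triangle $q(N) \to 0 \to q(M) \to q(N)[1]$, and a triangle with a zero middle term forces the connecting morphism $q(M) \to q(N)[1]$ to be an isomorphism. This settles the case of a single projective term.

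For the general exact sequence $0 \to N \to P_{n-1} \to \cdots \to P_0 \to M \to 0$, I would split it into short exact sequences in the usual way: writing $C_0 = M$, $C_n = N$, and $C_i = \Im(P_i \to P_{i-1})$ for $1 \le i \le n-1$, exactness gives short exact sequences $0 \to C_{i+1} \to P_i \to C_i \to 0$ for $0 \le i \le n-1$, each with projective middle term. The case $n=1$ then provides isomorphisms $q(C_i) \simeq q(C_{i+1})[1]$, and composing them gives $q(M) = q(C_0) \simeq q(C_1)[1] \simeq \cdots \simeq q(C_n)[n] = q(N)[n]$, as required. The ``in particular'' statement is the special case in which the exact sequence is a truncated projective resolution $0 \to \syz^n(M) \to P_{n-1} \to \cdots \to P_0 \to M \to 0$, so that $N = \syz^n(M)$ and we obtain $\theta_M^n \colon q(M) \simeq q(\syz^n(M))[n]$.

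The main obstacle is the naturality of $\theta_M^n$, since the construction above depends on choices of projective covers and lifts. I would first record that $q$ annihilates every morphism factoring through a projective module---such a morphism factors through the zero object $q(P)$ in $\D_\sg(A)$---so that $q$ descends to a functor on the stable category $A\-\underline{\mod}$, on which the syzygy functor $\syz$ is defined. It then suffices to treat $\theta^1$, since $\theta^n$ is obtained by composing shifted copies of $\theta^1$ and a composite of natural isomorphisms is natural. Given $f \colon M \to M'$, I would lift it to a morphism between the defining short exact sequences of $\syz(M)$ and $\syz(M')$, which induces $\syz(f)$ and, passing to $\D^\b(A\-\mod)$, a morphism of distinguished triangles. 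The commuting square relating the connecting morphisms is precisely the naturality square for $\theta^1$. The one point requiring care is that the lift, hence $\syz(f)$, is only well-defined up to morphisms factoring through projectives; but these vanish under $q$ by the observation above, so the induced square in $\D_\sg(A)$ is unambiguous and commutes, completing the argument.
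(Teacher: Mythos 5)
Your argument is correct and is essentially the standard proof behind the paper's citation of \cite[Lemma~2.2]{Chen2011b} (the paper gives no independent proof): a short exact sequence with projective middle term yields a distinguished triangle in $\D^\b(A\-\mod)$ whose middle term vanishes in $\D_\sg(A)$, forcing the connecting map to be invertible, and the general case follows by splicing. Your handling of naturality---noting that $q$ annihilates maps factoring through projectives, so the ambiguity in lifting $f$ and hence in $\syz(f)$ disappears in $\D_\sg(A)$---is exactly the point needed for the paper's subsequent construction of $\Phi^n$ and its compatibility with the maps $G^{n,n+1}$.
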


Observe that the composition $A\-\mod \to \D^\b(A\-\mod) \To{q} \D_\sg(A)$ vanishes on projective modules. Then it induces a unique functor $q': A\-\underline{\mod} \to \D_\sg(A)$. It follows from Lemma~\ref{lem2.2} that for each $n \geq 0$, the following diagram of functors
\[\xymatrix{
A\-\underline{\mod} \ar[d]_{q'} \ar[r]^{\syz^n} &A\-\underline{\mod}\ar[d]^{q'} \\
\D_\sg(A)\ar[r]^{[-n]} &\D_\sg(A)}\]
is commutative. Let $M$ and $N$ be in $A\-\mod$ and $n \geq 0$. Lemma~\ref{lem2.2} yields a natural map
\[\Phi^n: \underline{\Hom}_A(\syz^n(M), \syz^n(N)) \longrightarrow \Hom_{\D_\sg(A)}(q(M), q(N)).\]
Here, $\Phi^0$ is induced by $q'$ and $\Phi^n(f) = (\theta^n_N)^{-1}\circ(q'(f)[n])\circ\theta^n_M$ for $n \geq 1$.

Consider the following chain of maps $\{G^{n,n+1}\}_{n \geq 0}$ such that
\[G^{n,n+1}: \underline{\Hom}_A(\syz^n(M),\syz^n(N)) \longrightarrow \underline{\Hom}_A(\syz^{n+1}(M),\syz^{n+1}(N))\]
is induced by the syzygy functor $\syz$. The sequence $\{\Phi^n\}_{n \geq 0}$ is compatible with $\{G^{n,n+1}\}_{n\geq0}$, that is, $\Phi^{n+1} \circ G^{n,n+1} = \Phi^n$ for each $n \geq 0$. Then we obtain an induced map
\[\Phi: \varinjlim_{n\geq0}\underline{\Hom}_A(\syz^n(M),\syz^n(N)) \longrightarrow \Hom_{\D_\sg(A)}(q(M), q(N)).\]

\begin{lem}[{\cite[Exemple~2.3]{KV1987}}]\label{lem2.3}
Let $M$ and $N$ be in $A\-\mod$. Then there is a natural isomorphism
\[\Phi: \varinjlim_{n\geq0}\underline{\Hom}_A(\syz^n(M),\syz^n(N)) \overset{\simeq}\longrightarrow \Hom_{\D_\sg(A)}(q(M),q(N)).\]
\end{lem}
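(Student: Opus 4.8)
The plan is to prove that $\Phi$ is bijective by treating surjectivity and injectivity separately, using the calculus of fractions in the Verdier quotient $\D_\sg(A) = \D^\b(A\-\mod)/\perf(A)$ together with Lemmas~\ref{lem2.1} and \ref{lem2.2}. Since $\perf(A)$ is a thick triangulated subcategory, every morphism $\alpha \colon q(M) \to q(N)$ is represented by a right fraction $M \xleftarrow{s} Z \xrightarrow{g} N$ in $\D^\b(A\-\mod)$ whose denominator $s$ has perfect cone $C = \mathrm{cone}(s)$, so that $\alpha = q(g) \circ q(s)^{-1}$. The isomorphisms $\theta^n_M$ of Lemma~\ref{lem2.2} identify $\Hom_{\D_\sg}(q(M),q(N))$ with $\Hom_{\D_\sg}(q(\syz^n M), q(\syz^n N))$, and these identifications are compatible with the transition maps $G^{n,n+1}$ of the direct system; this is what lets me pass freely between $M,N$ and their syzygies.

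For surjectivity, I would first use Lemma~\ref{lem2.2} to reduce to the case where $M$ and $N$ lie in $\syz^{n_0}(A\-\mod)$ for $n_0$ large. Then, given a fraction as above, the point is to trade the complex $Z$ for a module. Because $C$ is isomorphic to a bounded complex of finitely generated projectives, the cohomology of $Z$ is bounded, and applying a truncation together with Lemma~\ref{lem2.1} one replaces the roof, up to isomorphism in $\D_\sg(A)$, by one in which $Z$ is a shift $M'[n]$ of a module $M'$ in a high syzygy category and $s$ is stably the canonical syzygy map. This exhibits $\alpha$ as $\Phi^n(f)$ for an honest homomorphism $f \colon \syz^n M \to \syz^n N$, proving surjectivity.

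For injectivity, suppose a class in the direct limit, represented by $f \colon \syz^n M \to \syz^n N$, satisfies $\Phi^n(f) = 0$; by construction of $\Phi^n$ this means $q'(f) = 0$, that is, $f$ factors in $\D^\b(A\-\mod)$ through a perfect complex $P^\bullet$, say $f = v \circ u$ with $u \colon \syz^n M \to P^\bullet$ and $v \colon P^\bullet \to \syz^n N$. Using that $\syz^n M$ is a module and $P^\bullet$ a bounded complex of projectives, I would truncate $P^\bullet$ and show that, after applying the syzygy functor sufficiently many times, $\syz^k(f)$ factors through a projective module, hence vanishes in $A\-\underline{\mod}$. Consequently $f$ maps to $0$ in $\varinjlim_{n}\underline{\Hom}_A(\syz^n M, \syz^n N)$, so $\Phi$ is injective.

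The main obstacle is the surjectivity step: converting an arbitrary fraction with perfect cone into one arising from a genuine module homomorphism between high syzygies. This is where the description of perfect complexes as bounded complexes of projectives is indispensable, and the truncation must be matched carefully with the natural isomorphisms $\theta^n_M$ of Lemma~\ref{lem2.2}, so that the resulting homomorphism represents $\alpha$ exactly rather than merely up to a perfect correction term.
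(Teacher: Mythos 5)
The paper itself offers no argument for this lemma: it is quoted verbatim from \cite{KV1987}, so there is no internal proof to compare against. Your outline follows the standard direct strategy (Verdier fractions with perfect cone, reduction to high syzygies), which is indeed how the result is proved in the literature; the problem is that, as written, both halves of your argument stop exactly at the assertion that constitutes the content of the lemma, and in the surjectivity half the mechanism you propose would not work. Concretely, Lemma~\ref{lem2.1} cannot do the job you assign to it: it produces an isomorphism $q(Z) \simeq q(M')[n]$ \emph{in} $\D_\sg(A)$, and such an isomorphism is itself a fraction. Composing it with your roof only yields another morphism in the quotient category; it does not replace the apex $Z$ by a shifted module at the level of $\D^\b(A\-\mod)$, so the proposed reduction to ``$s$ is stably the canonical syzygy map'' is circular.

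What is actually needed is a vanishing statement inside $\D^\b(A\-\mod)$, and with it the whole lemma follows in one stroke, without a separate injectivity argument. If $C$ is a perfect complex, realized as a bounded complex of projectives concentrated in degrees $[-a,b]$, then $\Hom_{\D^\b(A\-\mod)}(C, X[m]) = 0$ for every module $X$ and every $m > a$, because maps out of a bounded complex of projectives are computed by genuine chain maps in $\K^\b$ and the two complexes occupy disjoint degrees. Consequently, for any denominator $t\colon N \to N'$ with perfect cone $C$, the obstruction to factoring the canonical map $c_n\colon N \to \syz^n(N)[n]$ (whose cone is a shifted truncated projective resolution, hence perfect) through $t$ lies in $\Hom_{\D^\b(A\-\mod)}(C, \syz^n(N)[n+1])$, which vanishes once $n \geq a$. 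Thus the maps $c_n$ form a cofinal system of denominators, giving $\Hom_{\D_\sg(A)}(q(M),q(N)) \cong \varinjlim_n \Hom_{\D^\b(A\-\mod)}(M, \syz^n(N)[n]) \cong \varinjlim_n \Ext^n_A(M,\syz^n(N))$; one then identifies this with $\varinjlim_n \underline{\Hom}_A(\syz^n(M),\syz^n(N))$ (the natural surjections $\Ext^n_A(M,\syz^n N) \twoheadrightarrow \underline{\Hom}_A(\syz^n M,\syz^n N)$ have kernels killed by the transition maps, since an extension pulled back along a map factoring through a projective splits) and checks compatibility with the $\theta^n$'s and the $G^{n,n+1}$'s so that the composite is your $\Phi$. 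By contrast, your injectivity step rests on the unproved claim that a factorization of $f$ through a perfect complex in $\D^\b(A\-\mod)$ forces $\syz^k(f)$ to factor through a projective module for large $k$; that claim is true, but it is precisely the crux, and proving it directly requires the same chain-level work (represent the factorization by chain maps, truncate to degrees $[-a,0]$, and extract the stable factorization from the lowest degree). So the approach is salvageable, but the two key mechanisms are missing.
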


Next we recall the \emph{simplification} in the sense of \cite{Rin1976}.

Let $\AA$ be an abelian category. Recall that an object $X$ in $\AA$ is a \emph{brick} if $\End_{\AA}(X)$ is a division ring. Two objects $X$ and $Y$ are \emph{orthogonal} if $\Hom_{\AA}(X,Y) = 0$ and $\Hom_{\AA}(Y,X) = 0$. A full subcategory $\WW$ of $\AA$ is called a \emph{wide subcategory} if it is closed under kernels, cokernels and extensions. In particular, $\WW$ is an abelian category and the inclusion functor is exact. Recall that an abelian category $\AA$ is called a \emph{length category} provided that each object in $\AA$ has a composition series.

Let $\EE$ be a set of objects in an abelian category $\AA$. For an object $C$ in $\AA$, an \emph{$\EE$-filtration} of $C$ is given by a sequence of subobjects
\[0 = C_0 \subseteq C_1 \subseteq C_2 \subseteq \cdots \subseteq C_m = C,\]
such that each factor $C_i/C_{i-1}$ belongs to $\EE$ for $1 \leq i \leq m$. Denote by $\FF(\EE)$ the full subcategory of $\AA$ formed by objects in $\AA$ with an $\EE$-filtration.

\begin{lem}[{\cite[Theorem~1.2]{Rin1976}}]\label{lem2.4}
Let $\EE$ be a set of pairwise orthogonal bricks in $\AA$. Then $\FF(\EE)$ is a wide subcategory of $\AA$; moreover, $\FF(\EE)$ is a length category and $\EE$ is a complete set of pairwise non-isomorphic simple objects in $\FF(\EE)$.
\end{lem}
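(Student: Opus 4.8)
The plan is to control all morphisms in $\FF(\EE)$ by first understanding maps out of and into a single brick, and then to bootstrap up to full kernel-cokernel closure by induction on filtration length. For $C\in\FF(\EE)$, write $\ell(C)$ for the length of a chosen $\EE$-filtration; all inductions are on this invariant. The first step is a ``one-step'' dichotomy. A nonzero morphism between two objects of $\EE$ is automatically an isomorphism: if $X,Y\in\EE$ and $f\colon X\to Y$ is nonzero, orthogonality forces $X\cong Y$, and since $\End_\AA(X)$ is a division ring, $f$ is a unit. From this I would deduce, by induction on $\ell(N)$, that every nonzero morphism $f\colon X\to N$ with $X\in\EE$ and $N\in\FF(\EE)$ is a monomorphism, and dually every nonzero $N\to X$ is an epimorphism. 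Indeed, comparing $f$ with the top factor $N\twoheadrightarrow N/N'\cong X_m\in\EE$ (resp.\ the bottom factor $N_1\cong X_1\hookrightarrow N$), the relevant composite is either zero, so $f$ factors through a subobject/quotient of smaller length and the inductive hypothesis applies, or it is an isomorphism, which immediately forces $f$ to be monic (resp.\ epic).

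The second ingredient is a ``splitting-off'' lemma: if $X\in\EE$ and $g\colon X\to N$ is a monomorphism with $N\in\FF(\EE)$, then $\Coker g\in\FF(\EE)$. I would again induct on $\ell(N)$, comparing $g$ with the top projection $\pi\colon N\to N/N'\cong X_m$. If $\pi g\neq 0$ it is an isomorphism, and one checks $N=\Im g\oplus N'$, whence $\Coker g\cong N'\in\FF(\EE)$. If $\pi g=0$ then $g$ factors through $N'$, the inductive hypothesis gives $N'/\Im g\in\FF(\EE)$, and the short exact sequence $0\to N'/\Im g\to N/\Im g\to X_m\to 0$ together with the evident closure of $\FF(\EE)$ under extensions (concatenate filtrations) yields $\Coker g\in\FF(\EE)$.

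With these tools the main step, closure under kernels and cokernels, proceeds by induction on $\ell(M)$ for a morphism $f\colon M\to N$ in $\FF(\EE)$. Let $M_1\cong X\in\EE$ be the bottom subobject of a filtration of $M$ and examine $f|_{M_1}$. If $f|_{M_1}=0$, then $f$ factors through $M/M_1$ of smaller length, and a short diagram chase recovers $\Ker f$ (via an extension, using extension closure) and $\Coker f$ from those of the factored map. If $f|_{M_1}\neq 0$, it is monic by the one-step fact, so the splitting-off lemma gives $N/\Im(f|_{M_1})\in\FF(\EE)$, and $f$ descends to $\bar f\colon M/M_1\to N/\Im(f|_{M_1})$. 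Applying the snake lemma to the resulting map of short exact sequences, whose left vertical map $M_1\xrightarrow{\sim}\Im(f|_{M_1})$ is an isomorphism, produces $\Ker f\cong\Ker\bar f$ and $\Coker f\cong\Coker\bar f$, both in $\FF(\EE)$ by induction. Hence $\FF(\EE)$ is closed under kernels, cokernels and extensions, i.e.\ it is a wide, and therefore abelian, subcategory with exact inclusion.

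It remains to identify the simple objects. Each $X\in\EE$ is simple in $\FF(\EE)$: a nonzero subobject $Y\hookrightarrow X$ with $Y\in\FF(\EE)$ is monic and, by the dual one-step fact, epic, hence an isomorphism; and distinct members of $\EE$ are non-isomorphic by orthogonality. Conversely, any simple $S$ in $\FF(\EE)$ contains the bottom factor $S_1\cong X\in\EE$ of a filtration as a nonzero subobject, forcing $S\cong X$, so $\EE$ is a complete set of simple objects. Since any $\EE$-filtration now has simple factors, it is a composition series, so $\FF(\EE)$ is a length category. The genuine obstacle is the case $f|_{M_1}\neq 0$ in the main step: one must know that a nonzero map out of a brick is automatically monic and that its image is compatible with a filtration of the target, which is precisely what the splitting-off lemma and the subsequent snake-lemma comparison are built to encode, with orthogonality and the division-ring property keeping every inductive step under control.
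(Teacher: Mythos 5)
Your proposal is correct. Note, however, that the paper does not prove this lemma at all: it is quoted verbatim from Ringel (\cite[Theorem~1.2]{Rin1976}), so there is no in-paper argument to compare against; what you have done is reconstruct a self-contained proof of Ringel's simplification theorem. Your route is the standard one and it is sound at every step: the orthogonality-plus-brick dichotomy (a nonzero map between members of $\EE$ is an isomorphism) upgrades by induction on filtration length to ``nonzero maps out of a brick into a filtered object are monic, and dually epic into a brick''; the splitting-off lemma is correct in both cases (in the case $\pi g \neq 0$, the section $g(\pi g)^{-1}$ splits the top sequence, giving $N \cong N_{m-1} \oplus \Im g$); and the main induction on $\ell(M)$, with extension closure handling the case $f|_{M_1}=0$ and the snake lemma handling the case $f|_{M_1}\neq 0$, correctly yields closure under kernels and cokernels. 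The identification of $\EE$ as the complete set of simples and the length-category conclusion then follow exactly as you say. One cosmetic point: the ``length of a chosen filtration'' is not a priori an invariant of the object (that only follows a posteriori from Jordan--H\"older once the factors are known to be simple), so your inductions should be read as inductions on the statement ``the object admits an $\EE$-filtration of length at most $k$''; this is exactly how you use them, so nothing breaks.
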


Let $A$ be a connected Nakayama algebra without simple projective modules. Recall that the vertex set of the \emph{resolution quiver} $R(A)$ of $A$ is the set of isomorphism classes of simple $A$-modules, and there is an arrow from $S$ to $\gamma(S) = \tau\soc P(S)$ for each simple $A$-module $S$. Since each vertex in $R(A)$ is the start of a unique arrow, each connected component of $R(A)$ contains precisely one cycle. A simple $A$-module is called \emph{cyclic} provided that it lies in a cycle of $R(A)$.

Let $A$ be a connected Nakayama algebra of infinite global dimension. In particular, $A$ has no simple projective modules. Let $\SS$ be a complete set of pairwise non-isomorphic simple $A$-modules. Denote by $\SS_c$ the subset of all cyclic simple $A$-modules, and by $\SS_{nc}$ the subset of all \emph{noncyclic} simple $A$-modules.

\begin{lem}\label{lem2.5}
Let $A$ be a connected Nakayama algebra of infinite global dimension. Then $\SS_c$ is a complete set of pairwise non-isomorphic simple $A$-modules of infinite injective dimension, and $\SS_{nc}$ is a complete set of pairwise non-isomorphic simple $A$-modules of finite injective dimension.
\end{lem}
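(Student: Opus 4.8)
The plan is to characterize the injective dimension of a simple module in terms of the resolution quiver, using the duality between syzygies and cosyzygies that is built into the structure of Nakayama algebras.

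The plan is to convert injective dimension of a simple module into the combinatorics of $R(A)$ by means of $\Ext$-groups and the syzygy operator. Since $A$ has infinite global dimension it is a connected \emph{cyclic} Nakayama algebra, so I may index the simple modules by $\Z/n$ and write $c_i$ for the length of $P(S_i)$; then $R(A)$ is the functional graph of the map $\gamma=\tau\soc P(-)$ on $\SS$, and a direct computation on uniserial modules shows that $\top\syz^2 M\cong\gamma(\top M)$ whenever $\syz^2 M\neq0$ (this is convention‑independent, the point being that $\syz^2$ shifts the top exactly by the rule defining $\gamma$). Iterating, as long as the syzygies stay nonzero one has $\top\syz^{2m}S_j\cong\gamma^m(S_j)$ and $\top\syz^{2m+1}S_j\cong\gamma^m(\top\syz S_j)$. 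I also record the standard fact that the cyclic simples are precisely the vertices of the eventual image $\bigcap_{m\geq0}\gamma^m(\SS)$.

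First I would set up the homological dictionary. For simple modules one has $\Ext^k_A(S_j,S_i)\neq0$ if and only if $\top\syz^k S_j\cong S_i$: in the minimal projective resolution of $S_j$ each connecting map lands in the radical of the next projective, so all differentials of $\Hom_A(-,S_i)$ vanish and $\Ext^k_A(S_j,S_i)=\Hom_A(P(\top\syz^k S_j),S_i)$. Consequently $S_i$ has infinite injective dimension if and only if $S_i\cong\top\syz^k S_j$ for infinitely many $k$ and some $j$.

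The inclusion $\SS_c\supseteq\{\,S_i : \operatorname{id}_A S_i=\infty\,\}$ is then immediate. If $S_i$ is noncyclic, then for each $j$ the tops $\top\syz^k S_j$ either vanish (when $\syz^k S_j=0$) or, once $k$ is large, lie in the stabilized image $\bigcap_m\gamma^m(\SS)=\SS_c$ by the displayed formulas; in either case they differ from $S_i$. As there are only finitely many $j$, this yields a uniform bound on the degrees in which $\Ext^k_A(S_j,S_i)$ can be nonzero, so $\operatorname{id}_A S_i<\infty$. Taking contrapositives gives every simple of infinite injective dimension cyclic, i.e. $\SS_{nc}$ consists of simples of finite injective dimension.

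The reverse inclusion $\SS_c\subseteq\{\,S_i:\operatorname{id}_A S_i=\infty\,\}$ is the hard part, and it is here that infinite global dimension is essential. For cyclic $S_i$ I must produce a simple $S_j$ of infinite projective dimension whose even syzygy-tops $\gamma^m(S_j)$ run through $i$ infinitely often; equivalently, every cycle of $R(A)$ has to be reached by the syzygies of some module of infinite projective dimension. The obstacle is precisely this existence statement, and it is genuinely subtle: the simple $S_i$ itself may have \emph{finite} projective dimension even though $i$ is cyclic (its first syzygy can already be projective), so the witness must be sought among the modules flowing into the cycle of $i$. I would resolve this by using that infinite global dimension forces $\syz$-periodic indecomposables to exist, that the top of such a module is cyclic, and that periodicity propagates backwards along the basins of the cycles by the connectedness of $A$; this is the step I expect to demand the most care, and for the underlying resolution‑quiver analysis I would appeal to \cite{Rin2013}.
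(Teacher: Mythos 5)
Your first half is correct and complete: the dictionary $\Ext_A^k(S_j,S_i)\neq 0$ if and only if $S_i\cong\top\syz^k S_j$ (valid because syzygies of indecomposables over a Nakayama algebra are uniserial or zero and minimal resolutions have radical differentials), combined with $\top\syz^{2m}S_j=\gamma^m(S_j)$ and the stabilization of $\gamma^m(\SS)$ onto $\SS_c$, does prove that every simple of infinite injective dimension is cyclic. For contrast, the paper proves the whole lemma by a one-line citation: it is dual to \cite[Corollaries~3.6 and 3.7]{Mad2005}.

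The reverse inclusion is where your proposal has a genuine gap, which you half-acknowledge but do not close. What is needed is: for \emph{every} cycle $C$ of $R(A)$, an indecomposable of infinite projective dimension whose syzygy tops meet $C$. Your two established facts (infinite global dimension yields an $\syz$-periodic indecomposable $N$; the tops of $N$ and of $\syz N$ are cyclic) show only that the simples of infinite injective dimension form a nonempty union of cycles, namely the one or two cycles met by the particular $N$ you found; they say nothing about the remaining cycles, and $R(A)$ can genuinely be disconnected while $A$ has infinite global dimension (any non-semisimple selfinjective $A$ with $\gcd(n(A),\ell)>1$ for $\ell$ the Loewy length; or the admissible sequence $(4,3,4,3)$, where the cycles are loops at $S_1$ and $S_3$, yet $\operatorname{pd}S_1=\operatorname{pd}S_3=1$ and the simples of infinite projective dimension are $S_2,S_4$). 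The mechanisms you name cannot supply the missing witnesses: connectedness of $A$ only says all simples lie in one $\tau$-orbit and does not connect components of $R(A)$, and periodicity does not ``propagate backwards along basins'' --- a $\gamma$-preimage of the top of a periodic module need not be the top of any periodic module, as the example above shows. This missing existence statement is precisely the hard content of the dualized Madsen theorem the paper quotes, so deferring it to an unspecified result of \cite{Rin2013} is not a proof. Note also that the tools that would close the gap cleanly --- connectedness of the selfinjective algebra $A'$ of Proposition~3.7, which forces every elementary module $E(S)$ to be nonprojective in $\FF(\EE_c)$ and hence $\syz$-periodic with top $S$ --- are obtained in the paper only downstream of Lemma~2.5 (via Lemma~3.5), so invoking them here would be circular unless you re-prove them independently.
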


\begin{proof}
This is dual to \cite[Corollaries~3.6 and 3.7]{Mad2005}.
\end{proof}

We will need the following fact. Recall that `top' denotes the top a module.

\begin{lem}[{\cite[Corollary to Lemma~2]{Rin2013}}]\label{lem2.6}
Let $A$ be a connected Nakayama algebra without simple projective modules. Assume that $M$ is an indecomposable $A$-module and $m \geq 0$. Then either $\syz^{2m}(M) = 0$ or else $\top \syz^{2m}(M) = \gamma^m(\top M)$.
\end{lem}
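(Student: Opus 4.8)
The plan is to reduce everything to the single-step case $m=1$ and then iterate. First I would record that over a connected Nakayama algebra without simple projective modules the syzygy of an indecomposable module is again indecomposable or zero: every indecomposable $N$ is uniserial, of the form $P(\top N)/\rad^k P(\top N)$, so its syzygy $\syz(N) = \rad^k P(\top N)$ is a submodule of the uniserial module $P(\top N)$, hence uniserial or zero. Consequently $\syz^{2m}(M)$ is indecomposable or zero for every $m$, and the assertion only constrains its top. This makes an induction on $m$ available: the case $m=0$ is trivial, and for the inductive step it suffices to establish the base case for $N = \syz^{2m}(M)$, namely that for an indecomposable $N$ one has either $\syz^2(N) = 0$ or $\top \syz^2(N) = \gamma(\top N)$. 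Combined with the inductive hypothesis $\top \syz^{2m}(M) = \gamma^m(\top M)$, this yields $\top \syz^{2m+2}(M) = \gamma(\gamma^m(\top M)) = \gamma^{m+1}(\top M)$.

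For the base case I would argue by a direct computation on radical layers. Since the algebra is connected with no simple projective module, its quiver is cyclic; index the simple modules as $S_0, \dots, S_{n-1}$ over $\Z/n$ so that $\top \rad^j P(S_i) = S_{i+j}$, and write $c_i = \ell(P(S_i)) \geq 2$ for the Kupisch series, so that $\soc P(S_i) = S_{i+c_i-1}$. Let $N$ be indecomposable with $\top N = S_a$, say $N = P(S_a)/\rad^k P(S_a)$ with $1 \le k \le c_a$. If $k = c_a$ then $N$ is projective and $\syz^2(N) = 0$. Otherwise $\syz(N) = \rad^k P(S_a) \cong P(S_{a+k})/\rad^{c_a-k} P(S_{a+k})$ has top $S_{a+k}$ and length $c_a - k$, whence $\syz^2(N) = \rad^{c_a-k} P(S_{a+k})$, which is either zero or has top $S_{a+c_a}$. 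Thus whenever $\syz^2(N) \neq 0$ its top is $S_{a+c_a}$, depending only on $\top N$.

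It then remains to identify this shift with $\gamma$, that is, to check $\gamma(S_a) = S_{a+c_a}$. By definition $\gamma(S_a) = \tau \soc P(S_a) = \tau S_{a+c_a-1}$, so the identity reduces to the formula $\tau S_j = S_{j+1}$ for a non-projective simple module (here every simple is non-projective, as all $c_i \geq 2$). I would verify this from the minimal projective presentation $P(S_{j+1}) \to P(S_j) \to S_j \to 0$: applying $\Hom_A(-,A)$ and $D$ and reading off the cokernel identifies $\tau S_j = D\Tr S_j$ with $S_{j+1}$; alternatively one invokes the standard description of the Auslander--Reiten translate for Nakayama algebras. This single computation is the only nonformal input, and it is where I expect the real work to lie: the bookkeeping of how $\Tr$ interchanges the two vertices of the presentation, carried out on the opposite algebra, is the step most prone to index errors. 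Once $\gamma(S_a) = S_{a+c_a}$ is in hand, the base case reads $\top \syz^2(N) = \gamma(\top N)$ whenever $\syz^2(N) \neq 0$, and the induction above completes the proof.
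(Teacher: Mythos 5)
Your proposal is correct and complete. Note that the paper does not prove this lemma at all --- it quotes it from \cite[Corollary to Lemma~2]{Rin2013} --- and your argument (indecomposables over a cyclic-quiver Nakayama algebra are uniserial quotients $P(S_a)/\rad^k P(S_a)$, so $\syz^2$ shifts the top by $c_a = l(P(S_a))$ steps, which one identifies with $\gamma(S_a) = \tau\soc P(S_a) = S_{a+c_a}$, then induct on $m$) is essentially the computation in that reference. The one step you flag as delicate, $\tau S_j = S_{j+1}$, need not be recomputed via $\Tr$: it is exactly the standard structure fact the paper itself invokes from \cite[IV.3 and VI.2]{ARS1995}, namely that the composition factors of an indecomposable from the top are $S, \tau S, \tau^2 S, \dots$, so your radical-layer indexing and the $\tau$-orbit indexing coincide by citation.
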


\section{A Frobenius subcategory}
In this section, we introduce a Frobenius subcategory in the module category of a Nakayama algebra, whose stable category is triangle equivalent to the singularity category of the given algebra.

Throughout this section, $A$ is a connected Nakayama algebra of infinite global dimension. Denote by $n(A)$ the number of the isomorphism classes of simple $A$-modules. Denote by $l(M)$ the composition length of an $A$-module $M$. Recall that $\SS$ denotes a complete set of pairwise non-isomorphic simple $A$-modules, $\SS_c$ the subset of all cyclic simple $A$-modules and $\SS_{nc}$ the subset of all noncyclic simple $A$-modules. Observe that the map $\gamma$ restricts to a permutation on $\SS_c$. Let $\XX_c$ be the set formed by indecomposable $A$-modules $X$ such that both $\top X$ and $\tau\soc X$ belong to $\SS_c$.

The proof of the following well-known lemma uses the structure of indecomposable modules over Nakayama algebras; see \cite[IV.3 and VI.2]{ARS1995}. Each indecomposable $A$-module $X$ is uniserial, and it is uniquely determined by its top and its composition length. Its composition factors from the top are $S, \tau S, \cdots, \tau^{l-1}S$, where $S = \top X$ and $l = l(X)$. In particular, the projective cover $P(X)$ of $X$ is indecomposable.

\begin{lem}\label{lem3.1}
Let $M$ be an indecomposable $A$-module which contains a nonzero projective submodule $P$. Then $M$ is projective.
\end{lem}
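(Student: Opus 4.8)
The plan is to exploit the uniserial structure of indecomposable modules over a Nakayama algebra, recalled just before the statement. Let $M$ be indecomposable with a nonzero projective submodule $P$. Since $M$ is uniserial, its submodules form a chain, so $P$ is itself uniserial, and being projective it is an indecomposable projective module; moreover, as $M$ is uniserial, $P$ must be the \emph{unique} submodule of $M$ of length $l(P)$, hence $P = \soc^{l(P)}(M)$ sits at the \emph{bottom} of $M$. The goal is to show $P = M$, i.e.\ that the inclusion $P \hookrightarrow M$ is an isomorphism.

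First I would reduce to comparing composition factors. Write $S = \top P$ and $l = l(P)$, so that the composition factors of $P$ from the top are $S, \tau S, \dots, \tau^{l-1}S$, and $\soc P = \tau^{l-1}S$. Because $P$ is projective, $\soc P = \soc P(S) = \tau^{l-1}S$ is exactly the socle of the projective cover of $S$; this is the piece of structural data I intend to use. Since $P$ lies at the bottom of the uniserial module $M$, we have $\soc M = \soc P = \tau^{l-1}S$. Now consider $T = \top M$ and $l(M) = m \geq l$. The composition factors of $M$ from the top are $T, \tau T, \dots, \tau^{m-1}T$, and its socle is $\tau^{m-1}T$, so $\tau^{m-1}T = \tau^{l-1}S$.

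The key step is to show $m = l$ using projectivity of $P$ and indecomposability of $M$. The inclusion $P \hookrightarrow M$ gives, after passing to $M/\rad^{?}$ or directly, a surjection of projective covers: since $P$ is projective and sits inside $M$, the composite $P \hookrightarrow M \twoheadrightarrow M/P$ has $P$ as a projective submodule with projective quotient splitting issues — more cleanly, I would argue that $P(M) = P(T)$ is the indecomposable projective cover of $M$, and the inclusion $P = P(S) \hookrightarrow M$ lifts along $P(T) \twoheadrightarrow M$. Comparing lengths: $P(T)$ is uniserial with $\soc P(T) = \tau^{l(P(T))-1}T$, and the embedding $P(S) \hookrightarrow M \hookrightarrow$ (structure of) $P(T)$ forces $P(S)$ to coincide with the bottom segment of $P(T)$ of length $l$, whose socle $\tau^{l-1}S$ must match. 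The essential point is that a projective submodule of the uniserial module $M$, having the same socle as $M$ and being maximal at the bottom, can be properly contained in $M$ only if $M$ itself has a projective quotient by $P$, contradicting indecomposability unless $P = M$.

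The main obstacle I anticipate is making the length comparison rigorous rather than merely plausible: specifically, ruling out the case $m > l$, where $M$ properly contains $P$. Here I would use that $M$ is indecomposable together with the fact that $P$ is injective-like at the bottom in the following sense — if $P \subsetneq M$, then the quotient $M/P$ is a nonzero uniserial module whose socle would have to be $\top(\rad^{?}M)$, and one can produce a splitting $M \cong P \oplus (M/P)$ by using that $\soc(M/P)$ lifts to a complement of $P$, contradicting indecomposability. Concretely, since $\soc P = \soc M$, the inclusion $P \hookrightarrow M$ induces an isomorphism on socles; a standard argument then shows any essential extension of $P$ that is also a submodule-quotient situation splits, forcing $M = P$. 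I expect the delicate part is phrasing this splitting argument so that it genuinely uses projectivity of $P$ (equivalently, that $\soc P = \soc P(S)$ is the full socle of the indecomposable projective) rather than injectivity, which need not hold.
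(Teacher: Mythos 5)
Your stated goal is wrong, and this is the fundamental problem: the lemma concludes only that $M$ is \emph{projective}; it does not claim $P = M$, and the strengthening you set out to prove is false even in the paper's setting. For instance, over the connected Nakayama algebra with admissible sequence $(4,3)$ (which has infinite global dimension), one checks that $\rad P_1 \cong P_2$: the indecomposable projective $P_1$ of length $4$ properly contains the nonzero projective $P_2$ of length $3$. Consequently your closing step --- producing a splitting $M \cong P \oplus (M/P)$ whenever $P \subsetneq M$ --- cannot be made to work: the same example yields the non-split exact sequence $0 \to \rad P_1 \to P_1 \to \top P_1 \to 0$ whose submodule term is projective. The underlying error is exactly the one you flag at the end: splitting off a \emph{submodule} is what injectivity of $P$ would buy, not projectivity, and indecomposable projectives over a Nakayama algebra need not be injective. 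This is a genuine gap, not a matter of phrasing.

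Projectivity of $P$ must instead be used to split a surjection \emph{onto} $P$, and that is what the paper does. Assume $M$ is not projective and let $\pi: P(M) \to M$ be its projective cover, so $\Ker \pi \neq 0$. Restricting $\pi$ gives a surjection $\pi^{-1}(P) \to P$ with kernel $\Ker \pi$; it splits because $P$ is projective, so $\pi^{-1}(P) \cong P \oplus \Ker \pi$ is decomposable. But $\pi^{-1}(P)$ is a submodule of the uniserial module $P(M)$, hence uniserial and indecomposable --- a contradiction. Notably, the germ of this argument already appears in your proposal and is then abandoned: you observe that the inclusion $P \hookrightarrow M$ lifts along $\pi$ to a map $g: P \to P(M)$. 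One line finishes from there: $g$ is injective and $\Im g \cap \Ker \pi = 0$, but the submodules of the uniserial module $P(M)$ form a chain, so one of $\Im g$, $\Ker \pi$ is zero; since $P \neq 0$, this forces $\Ker \pi = 0$, i.e.\ $M \cong P(M)$ is projective. Had you pursued that lift instead of the socle/length comparison aimed at the false conclusion $P = M$, your proof would have been essentially the paper's.
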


\begin{proof}
Suppose that, on the contrary, $M$ is nonprojective. Then there is a proper surjective map $\pi: P(M) \to M$, where $P(M)$ is the projective cover of $M$. We have a proper surjective map $\pi^{-1}(P) \to P$; it splits, since  $P$ is projective. This is impossible, since $P(M)$ is indecomposable, and thus its submodule $\pi^{-1}(P)$ is indecomposable.
\end{proof}

\begin{lem}\label{lem3.2}
Let $f: X \to Y$ be a morphism in $\XX_c$. Then $\Ker f$, $\Coker f$ and $\Im f$ belong to $\XX_c \cup \{0\}$.
\end{lem}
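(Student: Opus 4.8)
The plan is to exploit the fact that over a Nakayama algebra every indecomposable module is uniserial, so that $\Ker f$, $\Im f$ and $\Coker f$ are automatically indecomposable or zero. Indeed, $\Ker f$ is a submodule of $X$, $\Coker f$ is a quotient of $Y$, and $\Im f$ is simultaneously a quotient of $X$ and a submodule of $Y$; since any submodule or quotient of a uniserial module is again uniserial, each of the three objects lies in $\XX_c \cup \{0\}$ once we verify that its top and the $\tau$-translate of its socle are cyclic. Thus the memberships $X, Y \in \XX_c$ are to be propagated to the three new objects purely through the behaviour of tops and socles along short exact sequences of uniserial modules.

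The key structural input I would isolate first is the following: if $0 \to V \to U \to W \to 0$ is exact with $U$ uniserial and $V, W$ both nonzero, then $\soc V = \soc U$, $\top W = \top U$, and $\top V = \tau\soc W$. The first two are immediate from uniseriality, and the third is the crucial point: it follows from the explicit description of the composition factors of a uniserial module from the top, namely $S, \tau S, \dots, \tau^{l-1}S$ with $S = \top U$ and $l = l(U)$, since the top of the submodule $V$ and the socle of the quotient $W$ are adjacent factors in this $\tau$-chain and hence differ by exactly one application of $\tau$.

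With this in hand I would then dispatch the three cases. For $\Im f$, being a quotient of $X$ gives $\top \Im f = \top X \in \SS_c$, and being a submodule of $Y$ gives $\tau\soc \Im f = \tau\soc Y \in \SS_c$, so $\Im f \in \XX_c \cup \{0\}$. Applying the structural fact to $0 \to \Ker f \to X \to \Im f \to 0$ yields $\tau\soc \Ker f = \tau\soc X \in \SS_c$ and $\top \Ker f = \tau\soc \Im f = \tau\soc Y \in \SS_c$, whence $\Ker f \in \XX_c \cup \{0\}$; applying it to $0 \to \Im f \to Y \to \Coker f \to 0$ yields $\top \Coker f = \top Y \in \SS_c$ and $\tau\soc \Coker f = \top \Im f = \top X \in \SS_c$, whence $\Coker f \in \XX_c \cup \{0\}$. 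The degenerate cases, where $f$ is zero, a monomorphism, or an epimorphism, are absorbed by the inclusion of $0$. The only genuine obstacle is the identity $\top V = \tau\soc W$ linking the top of a submodule to the Auslander--Reiten translate of the socle of the corresponding quotient; once this is pinned down through the consecutive $\tau$-orbit of composition factors, the cyclicity of tops and socles transfers along the two sequences with no further computation.
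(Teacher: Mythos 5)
Your proposal is correct and follows essentially the same route as the paper: the paper's proof computes exactly the same identities ($\top(\Ker f) = \tau\soc(\Im f)$, $\tau\soc(\Coker f) = \top(\Im f)$, etc.) from the two short exact sequences involving $\Im f$, merely stating them as known properties of uniserial modules rather than isolating your structural fact $\top V = \tau\soc W$ as a separate step. Your explicit justification via the $\tau$-chain of composition factors is precisely the background the paper invokes just before Lemma 3.1.
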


\begin{proof}
We may assume that $f$ is nonzero. Then we have $\top(\Im f) = \top X$ and $\tau\soc(\Im f) = \tau\soc Y$, both of which belong to $\SS_c$. Thus, $\Im f$ belongs to $\XX_c$.

If $f$ is not a monomorphism, then $\top(\Ker f) = \tau\soc(\Im f) = \tau\soc Y$ and $\tau\soc(\Ker f) = \tau\soc X$. Thus, $\Ker f$ belongs to $\XX_c$.

If $f$ is not an epimorphism, then $\top(\Coker f) = \top Y$ and $ \tau\soc(\Coker f) = \top(\Im f) = \top X$. Thus, $\Coker f$ belongs to $\XX_c$.
\end{proof}

\begin{lem}\label{lem3.3}
Let $X$ be an object in $\XX_c$. If $0 \subsetneq X'' \subsetneq X' \subsetneq X$ are subobjects of $X$ such that $X'/X''$ belongs to $\XX_c$, then $X''$ and $X/X'$ belong to $\XX_c$.
\end{lem}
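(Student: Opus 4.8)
The plan is to exploit the uniserial structure of modules over a Nakayama algebra, as recalled before Lemma~\ref{lem3.1}. Since $X$ is indecomposable it is uniserial, so its submodules form a chain $0 = Y_0 \subsetneq Y_1 \subsetneq \cdots \subsetneq Y_l = X$ with $l(Y_i) = i$ and $l = l(X)$; in particular the given subobjects satisfy $X'' = Y_a$ and $X' = Y_b$ for some $0 < a < b < l$. Writing $S = \top X$, the composition factors of $X$ read off from the top are $S, \tau S, \dots, \tau^{l-1}S$, and every subquotient of $X$ is a consecutive segment of this list, whose top and socle can therefore be computed directly.

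The heart of the argument is to record four relations between the tops and socles of the relevant subquotients, each of exactly the same nature as the computations carried out in the proof of Lemma~\ref{lem3.2}. First, applying those formulas to the short exact sequence $0 \to X'' \to X' \to X'/X'' \to 0$ of uniserial modules gives $\top X'' = \tau\soc(X'/X'')$ together with $\soc X'' = \soc X'$; since $X' \subseteq X$ forces $\soc X' = \soc X$, the latter yields $\tau\soc X'' = \tau\soc X$. Second, applying them to $0 \to X' \to X \to X/X' \to 0$ gives $\top(X/X') = \top X$ and $\tau\soc(X/X') = \top X'$, and here $\top X' = \top(X'/X'')$ because $X'/X''$ is a quotient of $X'$ through its top.

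These identities deliver the conclusion at once. For $X''$, its top equals $\tau\soc(X'/X'')$ and its socle satisfies $\tau\soc X'' = \tau\soc X$, both of which lie in $\SS_c$ by the hypotheses $X'/X'' \in \XX_c$ and $X \in \XX_c$; hence $X'' \in \XX_c$. For $X/X'$, its top equals $\top X$ and $\tau\soc(X/X') = \top(X'/X'')$, again both in $\SS_c$ by the same two hypotheses; hence $X/X' \in \XX_c$.

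The only point requiring care is the bookkeeping of the Auslander--Reiten shifts, that is, keeping the direction of $\tau$ straight when passing between the top of a submodule and the socle of the corresponding quotient. Since each of the four identities above is precisely an instance of the relations already established in the proof of Lemma~\ref{lem3.2}, I expect no essential obstacle beyond this verification.
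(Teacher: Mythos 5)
Your proof is correct and takes essentially the same route as the paper: both arguments verify membership in $\XX_c$ via the same four identities, namely $\top X'' = \tau\soc(X'/X'')$, $\tau\soc X'' = \tau\soc X$, $\top(X/X') = \top X$ and $\tau\soc(X/X') = \top(X'/X'')$, which the paper states directly and you justify explicitly from the uniserial structure. The extra bookkeeping you supply (submodules as a chain $Y_0 \subsetneq \cdots \subsetneq Y_l$, socles of submodules agreeing with $\soc X$, tops of quotients agreeing with $\top X$) is exactly the implicit content of the paper's one-line computation.
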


\begin{proof}
Since both $\top X'' = \tau\soc X'/X''$ and $\tau\soc X'' = \tau\soc X$ belong to $\SS_c$, it follows from the definition that $X''$ belongs to $\XX_c$. Similarly, since both $\top X/X' = \top X$ and $\tau\soc X/X' = \top X'/X''$ belong to $\SS_c$, it follows from the definition that $X/X'$ belongs to $\XX_c$.
\end{proof}

Denote by $\PP_c$ a complete set of projective covers of modules in $\SS_c$. We claim that $\PP_c$ is a subset of $\XX_c$. Indeed, we have $\top P(S) = S$ and $\tau\soc P(S) = \gamma(S)$, both of which belong to $\SS_c$. It follows that $\XX_c$ is closed under projective covers.

For each $S$ in $\SS_c$, let $E(S)$ denote the indecomposable $A$-module of the least composition length among those objects $X$ in $\XX_c$ with $\top X = S$. Inspired by \cite[Section~4]{Rin2013}, we call $E(S)$ the \emph{elementary module} associated to $S$. Denote by $\EE_c$ the set of elementary modules.
Recall that $\FF(\EE_c)$ is the full subcategory of $A\-\mod$ formed by $A$-modules with an $\EE_c$-filtration.

The \emph{support} of an  $A$-module $M$ is the subset of $\SS$ consisting of those simple $A$-modules appearing as a composition factor of $M$. For a set $\XX$ of $A$-modules, we denote by $\add \XX$ the full subcategory of $A\-\mod$ whose objects are direct summands of finite direct sums of objects in $\XX$.

The following result is in spirit close to \cite[Proposition~2]{Rin2013}. In particular, we prove that each elementary module $E$ is a brick and thus $l(E)\leq n(A)$.

\begin{prop}\label{prop3.4}
Let $A$ be a connected Nakayama algebra of infinite global dimension. Then the following statements hold.
\begin{enumerate}[ref=\theprop(\arabic*)]
  \item The set $\EE_c$ of elementary modules is a set of pairwise orthogonal bricks, and thus $\FF(\EE_c)$ is a wide subcategory of $A\-\mod$.
  \item $\FF(\EE_c) = \add \XX_c$, which is closed under projective covers.
  \item\label{prop3.4(3)} Let $E$ and $E'$ be elementary modules. Then $E = E'$ if and only if their supports have nonempty intersection.
\end{enumerate}
\end{prop}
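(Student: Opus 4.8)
The plan is to isolate first the single structural fact that drives all three parts. For an elementary module $E = E(S)$ of length $l$, the minimality in its definition forces
$\tau^{j}S \in \SS_{nc}$ for $0 < j < l$, while $\tau^{0}S = S$ and $\tau^{l}S = \tau\soc E$ lie in $\SS_c$. Indeed, if some $\tau^{j}S$ with $0<j<l$ were cyclic, then the length-$j$ uniserial quotient of $P(S)$ would have top $S \in \SS_c$ and $\tau\soc = \tau^{j}S \in \SS_c$, hence would lie in $\XX_c$ and be strictly shorter than $E$, contradicting minimality. Using the uniserial structure recalled before Lemma~\ref{lem3.1}, the support of $E$ is then $\{S, \tau S, \dots, \tau^{l-1}S\}$, in which $S$ is the unique cyclic member. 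I would also record that, since $A$ is connected of infinite global dimension, its quiver is a cycle and $\tau$ restricts to a permutation of $\SS$ with $\tau^{n(A)} = \mathrm{id}$; in particular $\tau^{n(A)}S = S$ is cyclic, so the displayed property forces $l \leq n(A)$, recovering the remark preceding the statement.

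For part (1), let $\phi\colon E \to E'$ be a nonzero map between elementary modules. Since $E, E'$ are uniserial, $\Im\phi$ is at once a nonzero quotient of $E$, hence of top $S = \top E$, and a nonzero submodule of $E'$, hence of top $\tau^{k}S'$ for some $0 \leq k < l' = l(E')$. Therefore $S = \tau^{k}S'$; as $S$ is cyclic while every $\tau^{k}S'$ with $0 < k < l'$ is noncyclic, we get $k = 0$, so $S = S'$ and thus $E = E'$. Applied to $E' = E$ this shows every nonzero endomorphism of $E$ is onto, hence an isomorphism, so $\End_A(E)$ is a division ring and $E$ is a brick; applied to $E \neq E'$ it shows $\Hom_A(E,E') = 0$, and by symmetry $\Hom_A(E',E) = 0$, so distinct elementary modules are orthogonal. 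Lemma~\ref{lem2.4} then gives that $\FF(\EE_c)$ is a wide, hence abelian length, subcategory with $\EE_c$ as its set of simple objects.

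For part (2), both inclusions reduce, via Krull--Schmidt and the closure of a wide subcategory under direct summands, to a statement about indecomposables. If $M$ is indecomposable with an $\EE_c$-filtration $0 = C_0 \subset \cdots \subset C_m = M$, then $M$ is uniserial with $\top M = \top(C_m/C_{m-1}) \in \SS_c$ and $\tau\soc M = \tau\soc(C_1/C_0) \in \SS_c$, so $M \in \XX_c$; this yields $\FF(\EE_c) \subseteq \add\XX_c$. Conversely, for indecomposable $X \in \XX_c$ of top $S$ and length $l$, I would list the indices $0 = j_0 < j_1 < \cdots < j_{r+1} = l$ in $\{0,\dots,l\}$ with $\tau^{j_i}S \in \SS_c$; the uniserial submodule chain of $X$ then has successive factors of top $\tau^{j_i}S$ and length $j_{i+1}-j_i$ with no cyclic composition factor strictly between, so each factor is exactly $E(\tau^{j_i}S)$ by the structural fact above. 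This is an $\EE_c$-filtration of $X$, giving $\add\XX_c \subseteq \FF(\EE_c)$. Closure of $\add\XX_c$ under projective covers follows from $\top P(X) = \top X$ and $\tau\soc P(X) = \gamma(\top X)$, both in $\SS_c$.

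For part (3), one direction is immediate. For the converse, suppose the supports of $E = E(S)$ and $E' = E(S')$ share a simple $T = \tau^{a}S = \tau^{b}S'$ with $0 \leq a < l$ and $0 \leq b < l'$; without loss of generality $a \geq b$. Applying the permutation $\tau^{-b}$ and using injectivity of $\tau$ on $\SS$, we obtain $\tau^{a-b}S = S'$ with $0 \leq a-b \leq a < l$. Since $S'$ is cyclic and the only cyclic member of the support of $E$ is its top $S$, we must have $a - b = 0$, whence $S = S'$ and $E = E'$. The main obstacle is really the first paragraph: pinning down the ``only the top is cyclic'' description of elementary modules together with the permutation behaviour of $\tau$, since every subsequent step is a short consequence of it; the one point to verify carefully is that these facts are read off correctly from the Kupisch/uniserial structure theory cited before Lemma~\ref{lem3.1}.
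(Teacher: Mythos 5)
Your proof is correct, but it takes a genuinely different route from the paper's. The paper proves (1) by combining the closure of $\XX_c$ under images and cokernels (Lemma~\ref{lem3.2}) with minimality of elementary modules, and proves (3) by using the orthogonality from (1) to realize $E$ as a strict subquotient $E_2/E_1$ of $E'$ and then invoking Lemma~\ref{lem3.3} to contradict minimality of $E'$. You instead front-load a single structural fact---for $E=E(S)$ of length $l$, minimality forces every composition factor $\tau^{j}S$ with $0<j<l$ to be noncyclic, so the support of $E(S)$ meets $\SS_c$ exactly in $\{S\}$---after which (1) and (3) become short bookkeeping with $\tau$-powers along the cyclic quiver; in particular you bypass Lemmas~\ref{lem3.2} and \ref{lem3.3} entirely. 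It is worth noting that this support fact appears in the paper only inside the proof of Lemma~\ref{lem3.5(1)}, where it is deduced \emph{from} part (3) (disjointness of supports); you reverse the logical order by deriving it directly from minimality, which is sound (no circularity) and makes (3) essentially immediate. For (2) the two arguments essentially coincide: the paper's induction, which peels off $E(\top X)$ and recurses on the kernel of $X \to E(\top X)$, when unrolled produces exactly your explicit filtration indexed by the cyclic composition factors. What the paper's route buys is that Lemmas~\ref{lem3.2} and \ref{lem3.3} stand as reusable tools; what yours buys is a sharper description of elementary modules (the length of $E(S)$ is the first return time of $S$ to $\SS_c$ under $\tau$, whence $l(E)\leq n(A)$ without the brick argument), a fact the paper ends up needing later anyway.
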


\begin{proof}
(1) Let $f: E \to E'$ be a nonzero map between elementary modules. By Lemma~\ref{lem3.2} $\Im f$ belongs to $\XX_c$. However, $\Im f$ is a factor module of $E$. By the definition of the elementary module $E$ we have $E = \Im f$. Then $f$ is an injective map. Similarly, $f$ is a surjective map and thus an isomorphism. Therefore $\EE_c$ is a set of pairwise orthogonal bricks.

By Lemma~\ref{lem2.4} $\FF(\EE_c)$ is a wide subcategory of $A\-\mod$. In particular, it is closed under direct sums and direct summands.

(2) We prove that any module $X$ in $\XX_c$ belongs to $\FF(\EE_c)$, and thus $\add \XX_c \subseteq \FF(\EE_c)$. We use induction on $l(X)$. Set $S = \top X \in \SS_c$. If $X = E(S) \in \EE_c$, we are done. Otherwise, there is a proper surjective map $\pi: X \to E(S)$. By Lemma~\ref{lem3.2} we have $\Ker \pi \in \XX_c$. Then by induction $\Ker \pi \in \FF(\EE_c)$. Therefore $X \in \FF(\EE_c)$.

Recall that each elementary module $E$ satisfies that $\top E \in \SS_c$ and $\tau\soc E \in \SS_c$. It follows from its $\EE_c$-filtration that each indecomposable object $X$ in $\FF(\EE_c)$ satisfies that $\top X \in \SS_c$ and $\tau\soc X \in \SS_c$. Then by definition $X$ belongs to $\XX_c$. Therefore $\add \XX_c \supseteq \FF(\EE_c)$, and thus $\FF(\EE_c) = \add \XX_c$. Since $\XX_c$ is closed under projective covers, we infer that $\add \XX_c$ is closed under projective covers.

(3) Suppose that $E \neq E'$ have a common composition factor. We may assume that $l(E) \leq l(E')$. Since $E$ and $E'$ are orthogonal, there exists a chain $0 \subsetneq E_1 \subsetneq E_2 \subsetneq E'$ of $A$-modules such that $E_2/E_1 = E$. By Lemma~\ref{lem3.3} we have that $E'/E_2$ belongs to $\XX_c$. This contradicts to the definition of the elementary module $E'$.
\end{proof}

\begin{lem}\label{lem3.5}
Let $S$ be a cyclic simple $A$-module. Then the following statements hold.
\begin{enumerate}[ref=\thelem(\arabic*)]
  \item\label{lem3.5(1)} The injective dimension of $E(S)$ is infinite, and the injective dimension of $P(S)$ is finite.
  \item\label{lem3.5(2)} There is a unique simple $A$-module $S'$ in $\SS_c$ such that $\top E(S') = \tau\soc E(S)$ and $\Ext_A^1(E(S),E(S')) \neq 0$.
\end{enumerate}
\end{lem}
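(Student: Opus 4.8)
The plan is to treat the two assertions separately. I would settle the uniqueness in \ref{lem3.5(2)} and the non-vanishing of $\Ext^1$ first, since these are concrete and self-contained, and reserve the dimension estimates of \ref{lem3.5(1)} for the end, as the finiteness there is the delicate point. To begin, note that the requirement $\top E(S') = \tau\soc E(S)$ forces $S' = \tau\soc E(S)$, because every elementary module satisfies $\top E(S') = S'$ by construction. Since $E(S) \in \XX_c$ we have $\tau\soc E(S) \in \SS_c$, so $S' := \tau\soc E(S)$ is a well-defined cyclic simple and is the only candidate; this already yields the uniqueness in \ref{lem3.5(2)}.

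For the non-vanishing I would argue as follows, granting from \ref{lem3.5(1)} that $E(S)$ is non-projective (were it projective it would coincide with its projective cover $P(S)$, which is impossible once $\mathrm{id}\,E(S) = \infty$ while $\mathrm{id}\,P(S) < \infty$). Writing $l = l(E(S))$, the syzygy $\syz(E(S)) = \rad^l P(S)$ is a nonzero uniserial module, and inspecting the composition factors of $P(S)$ gives $\top\syz(E(S)) = \tau^l S = S'$ and $\tau\soc\syz(E(S)) = \tau\soc P(S) = \gamma(S) \in \SS_c$, so $\syz(E(S))$ lies in $\XX_c$ with top $S'$. By minimality of $E(S')$ among modules of $\XX_c$ with top $S'$, the elementary module $E(S')$ is the uniserial quotient of $\syz(E(S))$ of length $l(E(S'))$; let $g\colon \syz(E(S)) \twoheadrightarrow E(S')$ be this surjection. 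Applying $\Hom_A(-,E(S'))$ to $0 \to \syz(E(S)) \to P(S) \to E(S) \to 0$ identifies $\Ext_A^1(E(S),E(S'))$ with the cokernel of the restriction map $\Hom_A(P(S),E(S')) \to \Hom_A(\syz(E(S)),E(S'))$, so it suffices to show that $g$ does not extend along $\syz(E(S)) = \rad^l P(S) \hookrightarrow P(S)$. If some $h\colon P(S) \to E(S')$ restricted to $g$, then $E(S') = g(\rad^l P(S)) = h(\rad^l P(S)) \subseteq \rad^l E(S')$, which is absurd for $l \geq 1$. Hence $\Ext_A^1(E(S),E(S')) \neq 0$, which completes \ref{lem3.5(2)}.

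For \ref{lem3.5(1)} I would pass to $A^\op$ through the duality $D$, under which injective dimension over $A$ becomes projective dimension over $A^\op$ and $\top$, $\soc$, and $\tau^{\pm1}$ are interchanged. The infinite injective dimension of $E(S)$ should then follow from the fact that $\top E(S) = S \in \SS_c$ has infinite injective dimension by \ref{lem2.5}: transporting the surjection $E(S) \twoheadrightarrow S$ through $D$ and applying \ref{lem2.6} for $A^\op$, the even syzygies of $DE(S)$ are controlled by $\gamma$ on $\SS_c$ and never vanish, since the relevant tops remain cyclic. The finiteness $\mathrm{id}\,P(S) < \infty$ I would likewise read as the statement that the indecomposable injective $A^\op$-module $DP(S)$ has finite projective dimension, the decisive input again being the cyclic/noncyclic dichotomy of \ref{lem2.5} applied to $A^\op$, which forces the corresponding minimal projective resolution to terminate.

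I expect this last finiteness to be the main obstacle. Unlike the infinite injective dimension, it cannot be detected on the composition factors of $P(S)$, since $S \in \SS_c$ itself has infinite injective dimension by \ref{lem2.5}; the required cancellation is genuinely homological and must be extracted from the projectivity of $P(S)$ — equivalently, the injectivity of $DP(S)$ over $A^\op$ — rather than from the simple constituents. Pinning down precisely why the even syzygies of the injective module $DP(S)$ die, and organizing this termination through \ref{lem2.5} and \ref{lem2.6}, is the step that will demand the most care.
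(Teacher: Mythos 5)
Your part (2) is correct and is a mild variant of the paper's own argument: the paper takes the $\EE_c$-filtration $0 = M_0 \subsetneq \cdots \subsetneq M_t = P(S)$ of $P(S)$ and observes that $0 \to M_{t-1}/M_{t-2} \to P(S)/M_{t-2} \to E(S) \to 0$ is non-split because $P(S)/M_{t-2}$ is indecomposable; your surjection $g\colon \syz E(S) = \rad^l P(S) \twoheadrightarrow E(S')$ is precisely the projection $M_{t-1} \to M_{t-1}/M_{t-2}$, and your non-extension argument via $h(\rad^l P(S)) \subseteq \rad^l E(S')$ and Nakayama's lemma is a clean substitute for the indecomposability argument. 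Both versions need exactly the same input from part (1), namely $E(S) \neq P(S)$.

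The genuine gap is part (1), which you do not prove, and on which your part (2) logically depends. For the infinite injective dimension of $E(S)$, your sketch fails on two counts. First, Lemma~\ref{lem2.6} is a dichotomy --- either $\syz^{2m}(M) = 0$ or its top is $\gamma^m(\top M)$ --- so it can never by itself certify that syzygies do not vanish. Second, cyclicity does not transport through $D$ the way you use it: by Lemma~\ref{lem2.5} applied to $A^\op$, a simple $A^\op$-module $DT$ is cyclic in $R(A^\op)$ exactly when $T$ has infinite \emph{projective} dimension over $A$, which is neither known nor implied for $\top DE(S) = D\soc E(S)$, the module your argument would need it for. The missing idea is Proposition~\ref{prop3.4(3)}: elementary modules have pairwise disjoint supports, so every composition factor of $E(S)$ other than $S$ is noncyclic, hence of finite injective dimension by Lemma~\ref{lem2.5}; if $E(S)$ had finite injective dimension, the sequence $0 \to \rad E(S) \to E(S) \to S \to 0$ would then force $S$ to have finite injective dimension, a contradiction --- no duality is needed. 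For the finiteness of the injective dimension of $P(S)$, which you explicitly leave open, the paper's trick is the opposite of what you predict: it \emph{is} detected on composition factors, only not those of $P(S)$ but those of the cokernel of an injective envelope $h\colon P(S) \to I$. Any composition factor $S'$ of $\Coker h$ is the top of a submodule $U$ of the uniserial module $I$ with $U \supsetneq P(S)$; by Lemma~\ref{lem3.1}, $U$ is projective, so $P(S) \subsetneq P(S') \subseteq I$, these two projectives have equal socles, hence $\gamma(S') = \gamma(S)$, and the injectivity of $\gamma$ on $\SS_c$ together with $S' \neq S$ forces $S'$ to be noncyclic. Thus $\Coker h$ has finite injective dimension, and therefore so does $P(S)$.
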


\begin{proof}
(1) We recall from Lemma~\ref{lem2.5} that $\SS_c$ is a complete set of pairwise non-isomorphic simple $A$-modules of infinite injective dimension. Since the elementary modules have pairwise disjoint supports, for each $S$ in $\SS_c$, the support of $E(S)$ contains precisely one cyclic simple $A$-module, that is, $S$. In other words, each composition factor of $E(S)$ different from $S$ is a noncyclic simple $A$-module, and thus has finite injective dimension. It follows that $E(S)$ has infinite injective dimension.

Let $h: P(S) \to I$ be an injective envelope of the $A$-module $P(S)$. We claim that each composition factor $S'$ of $\Coker h$ is a noncyclic simple $A$-module, and thus has finite injective dimension. Consequently, the injective dimension of $\Coker h$ is finite. Therefore the injective dimension of $P(S)$ is finite.

For the claim, we observe by Lemma~\ref{lem3.1} $P(S) \subsetneq P(S') \subseteq I$. Then we have $\gamma(S') = \gamma(S)$. Recall that the restriction of $\gamma$ on cyclic simple $A$-modules is injective. Therefore $S'$ is a noncyclic simple $A$-module, since $S$ is a cyclic simple $A$-module and $S' \neq S$.

(2) Let $E = E(S)$. Recall that $P(S)$ lies in $\XX_c$ and thus in $\FF(\EE_c)$. Consider the $\EE_c$-filtration of $P(S)$, say
\[0 = M_0 \subsetneq M_1 \subsetneq  \cdots \subsetneq  M_{t-1} \subsetneq M_t = P(S),\]
such that $M_i/M_{i-1}$ is elementary for $1 \leq i \leq t$. We observe that $M_t/M_{t-1} = E$ and $t \geq 2$, since by (1) we have $E(S)\neq P(S)$. Set $E' = M_{t-1}/M_{t-2}$. Note that $E' = E(S')$ for some cyclic simple $A$-module $S'$. Then
\[\top E' = \top(M_{t-1}/M_{t-2}) = \tau\soc(M_t/M_{t-1}) = \tau\soc E.\]
Since $M_t/M_{t-2} = P(S)/M_{t-2}$ is indecomposable, the exact sequence
\[0 \to M_{t-1}/M_{t-2} \to M_t/M_{t-2} \to M_t/M_{t-1} \to 0\]
does not split. Then we have $\Ext_A^1(E, E') \neq 0$. The uniqueness of $S'$ is obvious, since $S' = \tau\soc E(S)$.
\end{proof}

Recall that by definition $\tau\soc E$ lies in $\SS_c$ for each elementary module $E$. We have a map $\delta: \SS_c \to \SS_c$, which sends a cyclic simple $A$-module $S$ to $\delta(S) = \tau\soc E(S)$. We claim that $\delta$ is injective and thus bijective. Indeed, if $\delta(S) = \delta(\bar{S})$, then $\soc E(S) = \soc E(\bar{S})$. It follows from Lemma~\ref{prop3.4(3)} that $S = \bar{S}$.

\begin{cor}\label{cor3.6}
Let $S$ be a cyclic simple $A$-module and $t$ the minimal positive integer such that $\delta^t(S) = S$. Then $\SS_c = \{S, \delta(S), \cdots, \delta^{t-1}(S)\}$ and $\SS$ is the disjoint union of the supports of all elementary modules.
\end{cor}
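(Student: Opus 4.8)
The plan is to read off how the supports of the elementary modules tile $\SS$ under the action of $\tau$, and then to play the disjointness of these supports against the connectedness of $A$. First I would record the composition structure of an elementary module. Each $E(S)$ is uniserial with top $S$, so its composition factors read from the top are $S, \tau S, \dots, \tau^{l-1}S$ with $l = l(E(S))$, and its socle is $\tau^{l-1}S$. Since $E(S)$ is a brick by Proposition~\ref{prop3.4}(1), a nonzero nilpotent endomorphism cannot exist, which forces these factors to be pairwise non-isomorphic; hence the support of $E(S)$ is exactly $\{S, \tau S, \dots, \tau^{l-1}S\}$ and $\delta(S) = \tau\soc E(S) = \tau^{l}S$. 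The structural input I would invoke is that, as $A$ is connected of infinite global dimension, its quiver is a basic cycle, so that $\tau$ restricts to a single cyclic permutation of $\SS$ of length $n(A)$; in other words $\SS$ is one $\tau$-orbit.

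Next I would run the tiling argument along a single $\delta$-orbit. Fix $S \in \SS_c$ and let $t$ be minimal with $\delta^t(S) = S$. Writing $l_i = l(E(\delta^i S))$ and $L_i = l_0 + \cdots + l_{i-1}$, the identity $\delta(T) = \tau^{l(E(T))}T$ gives $\delta^i S = \tau^{L_i}S$, so that the support of $E(\delta^i S)$ is the block $\{\tau^{L_i}S, \dots, \tau^{L_{i+1}-1}S\}$. By Proposition~\ref{prop3.4}(3) the supports of distinct elementary modules are disjoint, so these blocks are pairwise disjoint; their union is $\{\tau^0 S, \dots, \tau^{L-1}S\}$ with $L = \sum_{i=0}^{t-1} l_i$, and $\tau^{L}S = \delta^t(S) = S$. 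Disjointness makes the $L$ listed simples distinct, whence $L \leq n(A)$, while $\tau^{L}S = S$ together with $\tau$ being an $n(A)$-cycle forces $n(A) \mid L$. Therefore $L = n(A)$, and the supports of the elementary modules in the $\delta$-orbit of $S$ already exhaust the whole $\tau$-orbit, that is, all of $\SS$.

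Finally I would deduce both assertions simultaneously. Because one $\delta$-orbit already accounts, disjointly, for every simple module, no simple can lie in the support of an elementary module outside this orbit; since each $\bar S \in \SS_c$ lies in the support of its own $E(\bar S)$, every cyclic simple must belong to the chosen orbit. Hence $\delta$ acts transitively on $\SS_c$, so $\SS_c = \{S, \delta(S), \dots, \delta^{t-1}(S)\}$ with $t = n(A) - |\SS_{nc}|$ the orbit length, and at the same time $\SS = \bigsqcup_{i=0}^{t-1}\mathrm{supp}\big(E(\delta^i S)\big)$ is the disjoint union of the supports of all elementary modules. I expect the main obstacle to be justifying the structural fact that $\tau$ is a single $n(A)$-cycle on $\SS$, and checking that the brick property really yields distinct composition factors; once those are secured, the remainder is the bookkeeping that the blocks tile the $\tau$-orbit exactly once.
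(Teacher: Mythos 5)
Your proof is correct, and its skeleton matches the paper's: cover $\SS$ by the supports of the elementary modules $E(\delta^i(S))$ along the $\delta$-orbit of $S$, use Proposition~\ref{prop3.4(3)} for disjointness, and conclude that any cyclic simple $\bar{S}$ must satisfy $E(\bar{S}) = E(\delta^i(S))$ for some $i$ because its support meets the covering. Where you genuinely differ is in how the covering is shown to be all of $\SS$. The paper observes that the union of the supports is closed under $\tau$ (inside each $E(\delta^i(S))$ the factors descend by $\tau$, and $\tau$ of the socle is precisely the top of $E(\delta^{i+1}(S))$, by the definition of $\delta$ and Lemma~\ref{lem3.5(2)}), and then invokes connectedness in the form ``any nonempty $\tau$-closed subset of $\SS$ equals $\SS$''. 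You instead do explicit arithmetic in the $\tau$-cycle: $\delta^i(S) = \tau^{L_i}S$, the supports are consecutive blocks of exponents, disjointness plus distinctness within each block gives $L = \sum_i l(E(\delta^i(S))) \leq n(A)$, while $\tau^{L}S = S$ forces $n(A) \mid L$, hence $L = n(A)$. This costs you one verification the paper's closure argument never needs: that the composition factors of a single elementary module are pairwise distinct, equivalently $l(E) \leq n(A)$. Your justification via the brick property is sound --- a repetition $\tau^m(\top E) = \top E$ with $1 \leq m \leq l(E)-1$ makes the quotient and the submodule of $E$ of length $l(E)-m$ uniserial with the same top and the same length, hence isomorphic, yielding a nonzero nilpotent endomorphism --- and the paper itself records $l(E)\leq n(A)$ as a consequence of the brick property just before Proposition~\ref{prop3.4}. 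The trade-off: your count makes the identity $\sum_{E \in \EE_c} l(E) = n(A)$ explicit, whereas the paper's $\tau$-closure argument is shorter and sidesteps the distinctness issue entirely.
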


\begin{proof}
Since $A$ is a connected Nakayama algebra without simple projective modules, any nonempty subset of $\SS$ which is closed under $\tau$ must be $\SS$. By Lemma~\ref{lem3.5(2)} the union of the supports of all $E(\delta^i(S))$ is closed under $\tau$, we infer that this union is $\SS$. Let $S'$ be a cyclic simple $A$-module. Then there exists an integer $0 \leq i \leq t-1$ such that the support of $E(\delta^i(S))$ and the support of $E(S)$ have nonempty intersection. It follows from Lemma~\ref{prop3.4(3)} that $S' = \delta^i(S)$.
\end{proof}

\begin{prop}\label{prop3.7}
Let $A$ be a connected Nakayama algebra of infinite global dimension. Then  $\FF(\EE_c)$ is equivalent to $A'\-\mod$, where $A'$ is a connected selfinjective Nakayama algebra.
\end{prop}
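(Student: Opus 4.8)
The plan is to realize $A'$ as the endomorphism algebra of a natural projective generator of the Frobenius category $\FF(\EE_c)$, and to identify it as selfinjective Nakayama by tracking tops and socles. By Proposition~\ref{prop3.4}, $\FF(\EE_c) = \add\XX_c$ is a wide subcategory whose simple objects are exactly the elementary modules $\EE_c$, and which is closed under projective covers. The first step is to observe that $\FF(\EE_c)$, as a length category, has a projective generator given by the projective covers $\{P(S) \mid S \in \SS_c\}$ computed inside the ambient category $A\-\mod$; these lie in $\FF(\EE_c)$ by Proposition~\ref{prop3.4}(2). Set $P = \bigoplus_{S \in \SS_c} P(S)$ and $A' = \End_A(P)^\op$. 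By the standard equivalence for length categories with a projective generator (Morita theory applied to the abelian category $\FF(\EE_c)$), the functor $\Hom_A(P, -)$ induces an equivalence $\FF(\EE_c) \simeq A'\-\mod$.

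The main work is to show that $A'$ is a connected selfinjective Nakayama algebra. First I would verify that $A'$ is connected: by Corollary~\ref{cor3.6}, the permutation $\delta$ acts transitively on $\SS_c$, and Lemma~\ref{lem3.5(2)} shows that $\Ext_A^1(E(S), E(\delta(S))) \neq 0$, so the simple objects of $\FF(\EE_c)$ are linked in a single $\delta$-cycle; this transitivity gives connectedness of $A'$. To get the Nakayama property I would show each indecomposable projective $P(S)$ of $\FF(\EE_c)$ is uniserial \emph{as an object of $\FF(\EE_c)$}: its $\EE_c$-filtration is unique and totally ordered because $P(S)$ is uniserial as an $A$-module (each indecomposable $A$-module is uniserial), and a totally ordered filtration with elementary factors forces the $\FF(\EE_c)$-radical series to be a chain. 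The selfinjectivity then follows from Lemma~\ref{lem3.5(1)}: the key point is that $P(S)$ is the projective cover of $E(S)$ in $\FF(\EE_c)$ with top $E(S)$ and, reading off the filtration in Lemma~\ref{lem3.5(2)}, its socle is $E(\delta(S))$-related, so each projective is also injective in $\FF(\EE_c)$.

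Concretely, selfinjectivity is cleanest to establish by showing $\FF(\EE_c)$ is a Frobenius category, since a Frobenius length category with a Morita equivalence to $A'\-\mod$ forces $A'$ to be selfinjective. I would argue that projectives and injectives coincide in $\FF(\EE_c)$: the filtration computation of Lemma~\ref{lem3.5(2)} exhibits, for each $S \in \SS_c$, a non-split extension realizing $P(S)$ as having simple top $E(S)$ and the filtration reaching down to $E(\delta^{?}(S))$ at the socle, and the uniseriality together with the bijectivity of $\delta$ (established just before Corollary~\ref{cor3.6}) matches each projective indecomposable bijectively with an injective indecomposable of the same length. The Nakayama shape of the Cartan-type data then follows because the assignment $E(S) \mapsto E(\delta(S))$ describes the action of the $\FF(\EE_c)$-radical on tops, which is precisely the defining combinatorial pattern of a (cyclic) Nakayama algebra.

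The step I expect to be the main obstacle is controlling the filtration of $P(S)$ inside $\FF(\EE_c)$ tightly enough to read off both its length and its socle as objects of $\FF(\EE_c)$ rather than of $A\-\mod$. Lemma~\ref{lem3.5} gives the top layer $E(S)$ and the penultimate layer $E(\delta(S))$ of the filtration, but to conclude uniseriality and to pin down the socle one must show the entire $\EE_c$-filtration of $P(S)$ is forced by the uniserial $A$-module structure, so that distinct elementary factors stack in a single chain with no branching. I anticipate this requires combining Lemma~\ref{lem3.3} (which propagates membership in $\XX_c$ down a filtration) with the bijectivity of $\delta$ and Corollary~\ref{cor3.6} to guarantee that the successive socles $\tau\soc$ of the filtration factors march through $\SS_c$ along the $\delta$-orbit without repetition until they close up, yielding exactly the cyclic Nakayama combinatorics for $A'$.
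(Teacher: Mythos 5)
Your construction of $A'$ and the Morita-theoretic reduction are exactly the paper's: the same generator $P=\bigoplus_{S\in\SS_c}P(S)$ with $A'=\End_A(P)^\op$, the same connectedness argument (transitivity of $\delta$ from Corollary~\ref{cor3.6} together with the nonsplit extensions of Lemma~\ref{lem3.5(2)}), and the same uniseriality observation for the Nakayama property. The genuine gap is in the selfinjectivity step, which is the heart of the proposition, and it occurs precisely at the point you flag as the ``main obstacle''. Your plan is to locate the socle of $P(S)$ inside $\FF(\EE_c)$ by letting the $\EE_c$-filtration factors ``march through $\SS_c$ along the $\delta$-orbit without repetition until they close up''. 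This cannot work as stated: the filtration factors of $P(S)$, read from the top, are $E(S), E(\delta(S)), E(\delta^2(S)), \ldots, E(\delta^{t-1}(S))$, where $t$ is the number of factors, and when $l(P(S))$ is large this sequence wraps around the $\delta$-orbit and repeats factors, so ``without repetition'' is false. More seriously, the socle of $P(S)$ in $\FF(\EE_c)$ is $E(\delta^{t-1}(S))$ with $t$ depending on $S$ in a way you never control, so bijectivity of $\delta$ alone does not show that every simple object of $\FF(\EE_c)$ occurs as the socle of some projective object, i.e.\ that every simple object embeds into a projective.

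The missing idea --- the paper's key point --- is to use the other permutation, $\gamma(S)=\tau\soc P(S)$, rather than $\delta(S)=\tau\soc E(S)$. Since $\gamma$ restricts to a permutation on $\SS_c$, the modules in $\PP_c$ have pairwise distinct socles as $A$-modules, and these socles exhaust $\{\soc E \mid E\in\EE_c\}$. Since elementary modules have disjoint supports (Proposition~\ref{prop3.4(3)}), hence pairwise distinct socles, and since any nonzero submodule of the uniserial $A$-module $P$ contains $\soc P$, the bottom filtration factor of $P$ must be the unique elementary module $E$ with $\soc E = \soc P$ (socles computed in $A\-\mod$). This identifies the socle of each $P$ in $\FF(\EE_c)$ and shows that every simple object of $\FF(\EE_c)$ embeds into a projective, with no need to control $t$. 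Finally, your concluding step --- ``matches each projective indecomposable bijectively with an injective indecomposable of the same length, hence $\FF(\EE_c)$ is Frobenius'' --- is asserted rather than proved: a bijective socle map by itself identifies no injective objects, and for a general artin algebra a one-sided socle condition does not imply selfinjectivity. What closes the argument is the fact (used implicitly by the paper as well) that a connected Nakayama algebra in which every simple module embeds into a projective module is selfinjective; some form of this Nakayama-specific fact must be invoked after the socle bijection is in hand, and your proposal establishes neither the bijection nor this final implication.
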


\begin{proof}
Let $P = \oplus_{S\in\SS_c}P(S)$ and $A' = \End_A(P)^\op$. Then $P$ is a projective object in $\FF(\EE_c)$, since $\FF(\EE_c)$ is a wide subcategory of $A\-\mod$. The natural projection $P(S) \to E(S)$ is a projective cover in the category $\FF(\EE_c)$. Recall from Lemma~\ref{lem2.4} that $\FF(\EE_c)$ is a length category with $\EE_c = \{E(S)\mid S \in \SS_c\}$ a complete set of pairwise non-isomorphic simple objects. We infer that for each object $X$ in $\FF(\EE_c)$, there is an epimorphism $P' \to X$ with $P'$ in $\add P$. Then $P$ is a projective generator for $\FF(\EE_c)$. We have an equivalence $\FF(\EE_c) \simeq A'\-\mod$; compare \cite[Chapter IV, Theorem~5.3]{Mit1965}.

Since each indecomposable object in $\FF(\EE_c)$ is uniserial, we infer that $A'$ is a Nakayama algebra. Denote by $\tau'$ the Auslander-Reiten translation of $A'$. Then we have $\tau'E(S) = E(\delta(S))$ by Lemma~\ref{lem3.5(2)}. It follows from Corollary~\ref{cor3.6} that all simple $A'$-modules are in the same $\tau'$-orbit. Therefore, the Nakayama algebra $A'$ is connected.

It remains to show that $A'$ is selfinjective. Since $\gamma$ restricts to a permutation on $\SS_c$, the modules in $\PP_c$ have pairwise distinct socles. Therefore, we have $\SS_c = \{\tau\soc P \mid P \in \PP_c\}$. Let $E$ be an elementary module. Since $\tau\soc E$ lies in $\SS_c$, there exists $P$ in $\PP_c$ with $\soc P = \soc E$ and $\soc P \neq \soc E'$ for any elementary module $E' \neq E$. It follows that the socle of $P$ in the category $\FF(\EE_c)$ is $E$. We have proven that every simple $A'$-module can embed into a projective $A'$-module. Therefore, $A'$ is selfinjective.
\end{proof}

The following result is analogous to \cite[Proposition~4]{Rin2013}.

\begin{lem}\label{lem3.8}
The following statements are equivalent for an indecomposable nonprojective $A$-module $M$.
\begin{enumerate}[ref=\thelem(\arabic*)]
  \item $M$ belongs to $\FF(\EE_c)$.
  \item\label{lem3.8(2)} There is an exact sequence $0 \to M \to P_n \to \cdots \to P_1 \to P_0 \to M \to 0$ for some $n \geq 1$ such that each $P_i$ belongs to $\PP_c$.
  \item There is an exact sequence $P_1 \to P_0 \to M \to 0$ such that $P_i$ belongs to $\PP_c$ for $i=0,1$.
\end{enumerate}
\end{lem}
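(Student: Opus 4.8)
The plan is to prove the implications in the cyclic order $(1) \Rightarrow (2) \Rightarrow (3) \Rightarrow (1)$, so that two of the three arrows are essentially formal consequences of the fact, recorded in Proposition~\ref{prop3.4}, that $\FF(\EE_c)$ is a wide subcategory of $A\-\mod$ equal to $\add\XX_c$ and containing $\PP_c$. For $(2)\Rightarrow(3)$ I would simply truncate the sequence in (2) at its last two projective terms, leaving the exact sequence $P_1 \to P_0 \to M \to 0$ with $P_0, P_1 \in \PP_c$. For $(3)\Rightarrow(1)$, since $\PP_c \subseteq \FF(\EE_c)$ the map $P_1 \to P_0$ is a morphism of $\FF(\EE_c)$, and a wide subcategory is closed under cokernels, so $M = \Coker(P_1 \to P_0)$ lies in $\FF(\EE_c)$.

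The substance lies in $(1)\Rightarrow(2)$, for which I would use the selfinjectivity from Proposition~\ref{prop3.7}. Identify $\FF(\EE_c)$ with $A'\-\mod$, where $A'$ is a connected selfinjective Nakayama algebra whose projectives are the objects of $\add\PP_c$. The first point to nail down is that syzygies computed inside $\FF(\EE_c)$ coincide with syzygies in $A\-\mod$: because $\FF(\EE_c)$ is closed under projective covers, the projective cover $P(M) \to M$ taken in $A\-\mod$ already lies in $\FF(\EE_c)$ and stays an essential epimorphism there, hence is the projective cover in $\FF(\EE_c)$ as well; closedness under kernels then gives $\syz(M) \in \FF(\EE_c)$. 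Since $A$ is Nakayama and $M$ is indecomposable, $P(M) \in \PP_c$ and $\syz(M)$ is again indecomposable; and since $A'$ is selfinjective, a nonprojective module has infinite projective dimension, so iterating shows every $\syz^i(M)$ is a nonzero indecomposable nonprojective object of $\FF(\EE_c)$ with projective cover $P(\syz^i(M)) \in \PP_c$.

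To close the resolution into a periodic one, I would appeal to periodicity over $A'$: a selfinjective algebra of finite representation type has syzygy functor an auto-equivalence of its stable category permuting the finitely many indecomposable nonprojectives, so $\syz^p(M) \cong M$ for some $p \geq 1$; enlarging $p$ to $2p$ if needed, I may assume $p \geq 2$. Splicing the short exact sequences $0 \to \syz^{i+1}(M) \to P_i \to \syz^i(M) \to 0$ with $P_i = P(\syz^i(M)) \in \PP_c$ for $0 \leq i \leq p-1$, and substituting $\syz^p(M) \cong M$, yields $0 \to M \to P_{p-1} \to \cdots \to P_0 \to M \to 0$ with $n = p-1 \geq 1$, as required. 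I expect the main obstacle to be exactly the verification in the previous paragraph that projective covers and syzygies internal to $\FF(\EE_c)$ agree with those in $A\-\mod$ and that all syzygies stay nonprojective, since this is what guarantees both that every term of the final sequence genuinely lies in $\PP_c$ and that the $\syz$-orbit of $M$ closes up into a cycle.
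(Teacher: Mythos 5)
Your proof is correct, and for the substantive direction $(1)\Rightarrow(2)$ it follows the same route as the paper: transfer the problem to the connected selfinjective Nakayama algebra $A'$ of Proposition~\ref{prop3.7} and use periodicity of projective resolutions there. The paper simply cites this periodicity as a known fact about selfinjective Nakayama algebras, whereas you derive it (syzygy is a stable auto-equivalence permuting the finitely many indecomposable nonprojectives) and, usefully, you make explicit the point the paper leaves implicit: that projective covers and syzygies computed inside $\FF(\EE_c)$ agree with those in $A\-\mod$, because $\FF(\EE_c)$ is closed under projective covers and the inclusion is exact. Where you genuinely diverge is $(3)\Rightarrow(1)$: the paper argues combinatorially that $\top M = \top P_0$ and $\tau\soc M = \top\syz M = \top P_1$ both lie in $\SS_c$, so that $M \in \XX_c$ by definition; you instead observe that $M = \Coker(P_1 \to P_0)$ with $P_0, P_1 \in \PP_c \subseteq \FF(\EE_c)$, and invoke closure of the wide subcategory $\FF(\EE_c)$ under cokernels. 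Your argument is shorter and purely categorical --- it needs no uniserial structure at all --- while the paper's has the small virtue of landing directly in $\XX_c$ and staying within the elementary Nakayama toolkit of Section~3. Both are complete; the only mild redundancy on your side is that indecomposability and nonprojectivity of $M$ are never needed in your $(3)\Rightarrow(1)$ step.
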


\begin{proof}
``(1) $\Rightarrow$ (2)" Recall that for an indecomposable nonprojective module $M'$ over a selfinjective Nakayama algebra $A'$, there exists an exact sequence $0 \to M' \to P'_n \to \cdots \to P'_1 \to P'_0 \to M' \to 0$ for some $n \geq 1$ such that each $P'_i$ is indecomposable projective. Then (2) follows from Proposition~\ref{prop3.7}.

``(2) $\Rightarrow$ (3)" This is obvious.

``(3) $\Rightarrow$ (1)" Observe that $\top M = \top P_0$ and $\tau\soc M = \top \syz M = \top P_1$, both of which belong to $\SS_c$. Then by definition $M$ belongs to $\XX_c$.
\end{proof}

Recall that each component of the resolution quiver $R(A)$ has a unique cycle. For each noncyclic vertex $S$ in $R(A)$, there exists a unique path of minimal length starting with $S$ and ending in a cycle. We call the length of this path the distance between $S$ and the cycle. Let $d(A)$ be the maximal distance between noncyclic vertices and cycles. Observe that $\gamma^d(S)$ is cyclic for each simple $A$-module $S$.

\begin{lem}\label{lem3.9}
Let $d = d(A)$ be as above. Then the following statements hold.
\begin{enumerate}[ref=\thelem(\arabic*)]
  \item\label{lem3.9(1)} $\syz^{2d}(M)$ belongs to $\FF(\EE_c)$ for any $M$ in $A\-\mod$.
  \item $\syz_0^{2d}(A\-\mod) \subseteq \FF(\EE_c) \subseteq \syz^{2d}(A\-\mod)$.
\end{enumerate}
\end{lem}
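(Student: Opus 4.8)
The plan is to reduce everything to indecomposable modules and then to track the top and the $\tau$-socle of iterated syzygies through the resolution quiver. Two elementary identities for an indecomposable nonprojective module $N$ will be used repeatedly: $\top\syz N = \tau\soc N$ and $\soc\syz N = \soc P(N)$, both of which follow immediately from the uniserial structure of $N$ and of its indecomposable projective cover $P(N)$. Since $\syz$ is additive and $\FF(\EE_c)=\add\XX_c$ is closed under finite direct sums, for statement (1) I may assume $M$ is indecomposable, and I may assume $\syz^{2d}(M)\neq 0$ (otherwise there is nothing to prove); in that case every $\syz^j(M)$ with $j\le 2d$ is indecomposable and nonprojective.

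For (1), the top of $\syz^{2d}(M)$ is handed to us directly: Lemma~\ref{lem2.6} gives $\top\syz^{2d}(M)=\gamma^d(\top M)$, which is cyclic by the very definition of $d=d(A)$. The $\tau$-socle is the delicate point, since the socle of a syzygy is not controlled as transparently as its top. I would set $c_k=\tau\soc\syz^{2k}(M)$ and establish the recursion $c_k=\gamma(c_{k-1})$ as follows: using $\soc\syz^{2k}(M)=\soc P\bigl(\top\syz^{2k-1}(M)\bigr)$ together with $\gamma(S)=\tau\soc P(S)$ gives $c_k=\gamma\bigl(\top\syz^{2k-1}(M)\bigr)$, while the identity $\top\syz N=\tau\soc N$ applied to $N=\syz^{2k-2}(M)$ yields $\top\syz^{2k-1}(M)=\tau\soc\syz^{2k-2}(M)=c_{k-1}$. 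With $c_0=\tau\soc M$ this gives $\tau\soc\syz^{2d}(M)=\gamma^d(\tau\soc M)$, which is again cyclic by the choice of $d$. Hence both $\top\syz^{2d}(M)$ and $\tau\soc\syz^{2d}(M)$ belong to $\SS_c$, so $\syz^{2d}(M)\in\XX_c\subseteq\FF(\EE_c)$.

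For the first inclusion of (2), take $X\in\syz_0^{2d}(A\-\mod)$ with defining exact sequence $0\to X\to P_{2d-1}\to\cdots\to P_0$, and set $Y=\Coker(P_1\to P_0)$ to obtain a (possibly non-minimal) projective resolution of $Y$ having $X$ as its $2d$-th syzygy. Comparing with a minimal resolution via the generalized Schanuel lemma identifies $X$ with $\syz^{2d}(Y)$ up to projective summands; since $X$ has no indecomposable projective summands, $X\cong\syz^{2d}(Y)$, which lies in $\FF(\EE_c)$ by part~\ref{lem3.9(1)}. For the second inclusion, since $\syz^{2d}(A\-\mod)$ is closed under finite direct sums and $\FF(\EE_c)=\add\XX_c$, it suffices to treat an indecomposable $X\in\XX_c$. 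If $X$ is nonprojective, Lemma~\ref{lem3.8(2)} supplies an exact sequence $0\to X\to P_n\to\cdots\to P_0\to X\to 0$ with each $P_i\in\PP_c$; splicing this sequence with itself $k$ times exhibits $X$ as a $k(n+1)$-th syzygy, and choosing $k$ with $k(n+1)\ge 2d$ gives $X\in\syz^{2d}(A\-\mod)$. If $X$ is projective, it lies in $\syz^{2d}(A\-\mod)$ trivially, via $0\to X\xrightarrow{\mathrm{id}}X\to 0\to\cdots\to 0$.

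The main obstacle is the $\tau$-socle computation in (1): unlike the top, which is provided by Lemma~\ref{lem2.6}, the socle of an even syzygy has to be propagated by hand, and the whole argument hinges on establishing the recursion $c_k=\gamma(c_{k-1})$ cleanly from the two structural identities above. Once that recursion is in place, the passage from $\gamma^d(\tau\soc M)$ to cyclicity is precisely what the definition of $d(A)$ guarantees, and statement (2) becomes essentially bookkeeping resting on part~(1) and Lemma~\ref{lem3.8(2)}.
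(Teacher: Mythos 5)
Your proof is correct, and its skeleton is the same as the paper's: reduce to indecomposable $M$, get the top of $\syz^{2d}(M)$ from Lemma~\ref{lem2.6}, identify the $\tau$-socle with the top of the next odd syzygy, and conclude membership in $\XX_c$; for part (2), Schanuel's lemma for the first inclusion and splicing the exact sequence of Lemma~\ref{lem3.8(2)} for the second. The one point where you diverge is exactly the step you call delicate. The paper disposes of the $\tau$-socle in one line: $\tau\soc\syz^{2d}(M)=\top\syz^{2d+1}(M)=\top\syz^{2d}(\syz M)$, and then Lemma~\ref{lem2.6} applied \emph{a second time, to the module $\syz M$}, gives $\gamma^d(\top\syz M)$. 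Your recursion $c_k=\gamma(c_{k-1})$ re-derives precisely this statement from the uniserial structure; in effect you re-prove the relevant case of Lemma~\ref{lem2.6} inline rather than citing it for $\syz M$. Nothing is wrong with that — it is just avoidable work, and your recursion is essentially the proof of Lemma~\ref{lem2.6} itself.

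Two small inaccuracies, neither of which is a genuine gap. First, your blanket claim that $\syz^{j}(M)$ is indecomposable and nonprojective for all $j\le 2d$ can fail at $j=2d$ (namely when $M$ has projective dimension exactly $2d$, so that $\syz^{2d}(M)$ is indecomposable projective and nonzero). Fortunately your argument only uses nonprojectivity for $j<2d$, and your conclusion survives in the projective case because $\PP_c\subseteq\XX_c$; this is exactly the case the paper's proof treats separately. Second, in part (2) the Schanuel comparison yields only that $X$ is isomorphic to the nonprojective part of $\syz^{2d}(Y)$, since the minimal syzygy $\syz^{2d}(Y)$ may itself be (or contain) a projective summand; the correct finish is that $X$ is a direct summand of $\syz^{2d}(Y)\in\FF(\EE_c)$ and $\FF(\EE_c)=\add\XX_c$ is closed under direct summands by Proposition~\ref{prop3.4}, rather than the literal isomorphism $X\cong\syz^{2d}(Y)$.
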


\begin{proof}
(1) We may assume that $M$ is indecomposable. It follows from Lemma~\ref{lem2.6} that either $\syz^{2d}(M)$ is zero or $\top \syz^{2d}(M) = \gamma^d(\top M)$. If $\syz^{2d}(M)$ is indecomposable projective, then $\syz^{2d}(M)$ belongs to $\PP_c$. If $\syz^{2d}(M)$ is indecomposable nonprojective, then $\top \syz^{2d}(M) = \gamma^d(\top M)$ and $\tau\soc \syz^{2d}(M) = \top \syz^{2d+1}(M) = \gamma^d(\top \syz M)$. Then by definition $M$ belongs to $\XX_c$.

(2) The first inclusion follows from (1), and the second one follows from Lemma~\ref{lem3.8(2)}.
\end{proof}

By Proposition~\ref{prop3.7} $\FF(\EE_c)$ is a Frobenius category whose projective objects are precisely $\add \PP_c$. Denote by $\underline{\FF}(\EE_c)$ the stable category of $\FF(\EE_c)$ modulo projective objects. It is a triangulated category; see \cite{Hap1988}.

Recall from Proposition~\ref{prop3.4} that $\FF(\EE_c)$ is a wide subcategory of $A\-\mod$ which is closed under projective covers. Consider the inclusion functor $i: \FF(\EE_c) \to A\-\mod$. It induces uniquely a fully-faithful functor $i': \underline{\FF}(\EE_c) \to A\-\underline\mod$. We recall the induced functor $q': A\-\underline{\mod} \to \D_\sg(A)$ in Section~2.

The following is the main result of this section, which describes the singularity category of $A$ as a subcategory of the stable module category of $A$.

\begin{thm}\label{thm3.10}
Let $A$ be a connected Nakayama algebra of infinite global dimension. Then the composite functor $q'\circ{i}': \underline{\FF}(\EE_c) \to A\-\underline{\mod} \to \D_\sg(A)$ is a triangle equivalence.
\end{thm}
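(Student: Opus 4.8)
The plan is to verify that $q'\circ i'$ is a triangle functor which is both fully faithful and dense; a fully faithful and dense triangle functor between triangulated categories is automatically a triangle equivalence. The three verifications draw on facts already established: that $\FF(\EE_c)$ is a Frobenius category with $\add\PP_c$ its projective-injective objects and is closed under projective covers (Propositions~\ref{prop3.4} and~\ref{prop3.7}); that Hom-spaces in $\D_\sg(A)$ are computed as direct limits of stable Hom-spaces along the syzygy functor (Lemma~\ref{lem2.3}); and that a high enough syzygy of any module lands in $\FF(\EE_c)$ (Lemma~\ref{lem3.9}).

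First I would fix the triangle structure. Since $\FF(\EE_c)$ is closed under projective covers, the syzygy of an object of $\FF(\EE_c)$ computed in $A\-\mod$ agrees with the one computed in $\FF(\EE_c)$; hence $i'$ intertwines the two syzygy functors, and on $\underline\FF(\EE_c)$ the functor $\syz$ is invertible, its inverse being the suspension. The commutative diagram relating $\syz$ and $[-1]$ from Section~2 then yields a natural isomorphism $q'i'\circ\syz\simeq[-1]\circ q'i'$, and so an isomorphism between $q'i'$ applied to the suspension $\syz^{-1}$ and $[1]\circ q'i'$. To see that distinguished triangles are preserved, I would observe that a short exact sequence $0\to X\to Y\to Z\to 0$ in $\FF(\EE_c)$ is exact in $A\-\mod$, hence gives a triangle in $\D^\b(A\-\mod)$, which $q$ sends to a triangle in $\D_\sg(A)$. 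Comparing this with the standard triangle of $\underline\FF(\EE_c)$ built from a sequence $0\to X\to I\to\syz^{-1}X\to 0$ with $I\in\add\PP_c$, and using that $q(I)=0$ because $I$ is a projective $A$-module and hence perfect, a morphism-of-short-exact-sequences argument identifies the two connecting morphisms under the isomorphism $q(\syz^{-1}X)\simeq q(X)[1]$. This matching of connecting morphisms is the step requiring the most care, and is the main obstacle.

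To prove full faithfulness I would fix $M,N$ in $\FF(\EE_c)$. By Lemma~\ref{lem2.3} the map $\Phi^0$ induced by $q'$ factors as the canonical map into $\varinjlim_n\underline\Hom_A(\syz^n M,\syz^n N)$ followed by the isomorphism $\Phi$. Closure under projective covers puts every $\syz^n M$ and $\syz^n N$ in $\FF(\EE_c)$, so by full faithfulness of $i'$ each term equals $\underline\Hom_{\FF(\EE_c)}(\syz^n M,\syz^n N)$, and each transition map $G^{n,n+1}$ is the action of the syzygy functor of $\underline\FF(\EE_c)$. As $\FF(\EE_c)\simeq A'\-\mod$ with $A'$ selfinjective, this syzygy functor is an autoequivalence, so every transition map is bijective; hence the canonical map $\underline\Hom_{\FF(\EE_c)}(M,N)\to\varinjlim_n$ is an isomorphism, and therefore so is $\Phi^0$. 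Thus $q'i'$ is fully faithful.

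Finally, for density I would take an object $X$ of $\D_\sg(A)$. By Lemma~\ref{lem2.1} there are a module $M$ and an integer $n$ with $X\simeq q(M)[n]$. By Lemma~\ref{lem3.9} the module $\syz^{2d}M$ lies in $\FF(\EE_c)$, and the natural isomorphism $\theta^{2d}_M$ of Lemma~\ref{lem2.2} gives $q(M)\simeq q(\syz^{2d}M)[2d]$, so $X\simeq q(\syz^{2d}M)[2d+n]$. Using the suspension-compatibility from the first step, this equals $q'i'\big(\syz^{-(2d+n)}(\syz^{2d}M)\big)$, where $\syz^{-(2d+n)}(\syz^{2d}M)$ lies in $\underline\FF(\EE_c)$ since the syzygy is invertible there. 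Hence $X$ is in the essential image, so $q'i'$ is dense, completing the proof.
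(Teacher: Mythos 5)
Your proposal is correct and follows essentially the same route as the paper: full faithfulness via Lemma~\ref{lem2.3} together with the fact that the syzygy functor restricts to an autoequivalence of $\underline{\FF}(\EE_c)$, and density via Lemmas~\ref{lem2.1}, \ref{lem2.2} and \ref{lem3.9} plus closure of the essential image under all shifts. The only difference is presentational: for the triangle-functor step the paper simply cites Keller's $\partial$-functor formalism and \cite[Lemmas~2.4 and 2.5]{Chen2011a}, whereas you sketch the direct verification (matching Happel's connecting morphism with the derived-category connecting morphism via $q(I)=0$ and naturality of connecting maps for a morphism of short exact sequences), which is exactly the content of those citations and does go through.
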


\begin{proof}
Observe that the composite functor $\FF(\EE_c) \To{i} A\-\mod \to \D^\b(A\-\mod) \To{q} \D_\sg(A)$ is a $\partial$-functor in the sense of \cite[Section~1]{Kel1991}; compare \cite[Lemma~2.4]{Chen2011a}. Then the functor $q' \circ i'$ is a triangle functor; see \cite[Lemma~2.5]{Chen2011a}.

Recall that the subcategory $\FF(\EE_c)$ of $A\-\mod$ is wide and closed under projective covers; moreover, $\FF(\EE_c)$ is a Frobenius category. Then the restriction of the syzygy functor $\syz: A\-\underline{\mod} \to A\-\underline{\mod}$ on $\underline{\FF}(\EE_c)$ is an autoequivalence, in particular, it is fully faithful. Then the functor $q'\circ i'$ is fully faithful by the natural isomorphism in Lemma~\ref{lem2.3}.

It remains to show that the functor $q'\circ i'$ is also dense. Let $X$ be an object in $\D_\sg(A)$. It follows from Lemmas \ref{lem2.1} and \ref{lem3.9(1)} that there exists a module $M$ in $\FF(\EE_c)$ and $n$ sufficiently large such that $X \simeq q(M)[n]$ in $\D_\sg(A)$. By above, the image $\Im(q'\circ i')$ is a triangulated subcategory of $\D_\sg(A)$, in particular, it is closed under $[m]$ for all $m \in \Z$. It follows from $X \simeq q(M)[n]$ that $X$ lies in $\Im(q'\circ i')$. This finishes our proof.
\end{proof}

We observe the following immediate consequence of Proposition~\ref{prop3.7} and Theorem~\ref{thm3.10}.

\begin{cor}[compare {\cite[Corollary~3.11]{CY2013}}]\label{cor3.11}
Let $A$ be a connected Nakayama algebra of infinite global dimension. Then there is a triangle equivalence between $\D_\sg(A)$ and $A'\-\underline{\mod}$ for a connected selfinjective Nakayama algebra $A'$.
\end{cor}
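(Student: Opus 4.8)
The plan is to combine the two preceding results directly, so the proof is essentially a bookkeeping argument. By Proposition~\ref{prop3.7} there is an equivalence of abelian categories $F\colon \FF(\EE_c) \To{\simeq} A'\-\mod$, where $A'$ is a connected selfinjective Nakayama algebra. The first thing I would record is that both $\FF(\EE_c)$ and $A'\-\mod$ are Frobenius abelian categories: the former by the discussion following Proposition~\ref{prop3.7}, where its projective objects are identified as precisely $\add\PP_c$, and the latter because $A'$ is selfinjective, so that projective and injective objects coincide in $A'\-\mod$.

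Next I would observe that $F$, being an equivalence of abelian categories, is automatically exact and preserves projective objects, since projectivity is a categorical notion; dually it preserves injective objects. As both categories are Frobenius, $F$ thereby identifies their exact (Frobenius) structures and carries the projective-injective objects of one onto those of the other. By Happel's theorem \cite{Hap1988}, such an equivalence descends to a triangle equivalence $\underline{F}\colon \underline{\FF}(\EE_c) \To{\simeq} A'\-\underline{\mod}$ on the associated stable categories.

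Finally, Theorem~\ref{thm3.10} supplies a triangle equivalence $q'\circ i'\colon \underline{\FF}(\EE_c) \To{\simeq} \D_\sg(A)$. Composing $\underline{F}$ with a quasi-inverse of $q'\circ i'$ then yields the desired triangle equivalence $\D_\sg(A) \simeq A'\-\underline{\mod}$. I do not expect any genuine obstacle here, which is exactly why the statement is phrased as a corollary. The only point deserving a word of justification is the passage from the abelian equivalence of Proposition~\ref{prop3.7} to a triangle equivalence of stable categories, and this is covered entirely by the general principle that an exact equivalence between Frobenius categories which preserves projective objects induces a triangle equivalence on the stable categories; so the real content of the result lies already in Proposition~\ref{prop3.7} and Theorem~\ref{thm3.10} rather than in this deduction.
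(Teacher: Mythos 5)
Your proposal is correct and follows exactly the paper's route: the paper derives this corollary as an immediate consequence of Proposition~\ref{prop3.7} and Theorem~\ref{thm3.10}, which is precisely your composition of the stable equivalence induced by the abelian equivalence $\FF(\EE_c) \simeq A'\-\mod$ with the triangle equivalence $\underline{\FF}(\EE_c) \simeq \D_\sg(A)$. The only detail you spell out beyond the paper --- that an exact equivalence of Frobenius categories preserving projectives descends to a triangle equivalence of stable categories --- is exactly the standard fact the paper leaves implicit.
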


\section{A duality between singularity categories}
In this section, we prove that there is a triangle duality between the singularity category of a Nakayama algebra and the singularity category of its opposite algebra. The proof uses the Frobenius subcategory in the previous section.

Let $A$ be a connected Nakayama algebra of infinite global dimension. We recall from Propositions \ref{prop3.4} and \ref{prop3.7} that the category $\FF = \FF(\EE_c)$ is a wide subcategory of $A\-\mod$ closed under projective covers; it is equivalent to $A'\-\mod$ for a connected selfinjective Nakayama algebra $A'$.

Consider the inclusion functor $i: \FF \to A\-\mod$. We claim that it admits an exact right adjoint $i_\rho: A\-\mod \to \FF$.

For the claim, recall from the proof of Proposition~\ref{prop3.7} that $A' = \End_A(P)^\op$ with $P = \oplus_{S\in\SS_c}P(S)$. We identify $\FF$ with $A'\-\mod$. Then the inclusion $i$ is identified with $P\otimes_{A'}-$. The right adjoint is given by $i_\rho = \Hom_A(P,-)$. It is exact since ${}_AP$ is projective.

The adjoint pair $(i, i_\rho)$ induces an adjoint pair $(i^*, i_\rho^*)$ of triangle functors between bounded derived categories. Here,  for an exact functor $F$ between abelian categories, $F^*$ denotes its extension on bounded derived categories.

Recall that $\K^\b(A\-\inj)$ denotes the the bounded homotopy category of $A\-\inj$. We view $\K^\b(A\-\inj)$ as a thick subcategory of $\D^\b(A\-\mod)$ via the canonical functor. We mention that by the usual duality on module categories, the quotient triangulated category $\D^\b(A\-\mod)/\K^\b(A\-\inj)$ is triangle equivalent to the opposite category of the singularity category $\D_\sg(A^\op)$ of $A^\op$. Here, $A^\op$ is the opposite algebra of $A$.

The proof of the following result is similar to \cite[Propsition 2.13]{CY2013}. Recall that $\PP_c$ is a complete set of pairwise non-isomorphic indecomposable projective objects in $\FF = \FF(\EE_c)$.

\begin{lem}\label{lem4.1}
Let $A$ be a connected Nakayama algebra of infinite global dimension. Then the above functors $i_\rho^*$ and $i^*$ induce mutually inverse triangle equivalences between $\D^\b(A\-\mod)/\K^\b(A\-\inj)$ and $\D^\b(\FF)/\K^\b(\add \PP_c)$.
\end{lem}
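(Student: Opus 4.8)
<br>

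The goal is to show that the adjoint pair $(i^*, i_\rho^*)$ of triangle functors between $\D^\b(\FF)$ and $\D^\b(A\-\mod)$ descends to the indicated quotient categories and gives mutually inverse equivalences there. The plan is to verify three things: first, that each functor sends the relevant subcategory into the other, so that the descent to quotients is well-defined; second, that the two induced functors are mutually inverse; and third, that everything is compatible with the triangulated structure (which is automatic, since $i^*$ and $i_\rho^*$ are already triangle functors and the quotient functors are triangle functors).

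\medskip

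First I would check the compatibility on subcategories. For $i^*$ the point is that $i = P\otimes_{A'}-$ is exact and sends the indecomposable projective objects $\PP_c$ of $\FF$ to the projective-injective $A$-modules in $\PP_c \subseteq A\-\mod$; one must confirm that the modules in $\PP_c$ are in fact injective as $A$-modules, which follows from Lemma~\ref{lem3.5(1)} together with the fact that over a Nakayama algebra the projective covers $P(S)$ of cyclic simples are projective-injective (their socles are cyclic simples, dually to the top statement). Hence $i^*$ carries $\K^\b(\add\PP_c)$ into $\K^\b(A\-\inj)$, so it induces a functor on the quotients. For $i_\rho^*$, since $i_\rho = \Hom_A(P,-)$ is exact, it suffices to check that $i_\rho$ sends injective $A$-modules into $\add \PP_c$. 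This is the content that parallels \cite[Proposition~2.13]{CY2013}: one computes $\Hom_A(P, I)$ for each indecomposable injective $I$ and shows the result is a (possibly zero) object of $\add\PP_c$, using that $P = \oplus_{S\in\SS_c}P(S)$ and that each $P(S)$ is itself injective.

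\medskip

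Next I would establish that the induced functors are mutually inverse. The unit and counit of the adjunction $(i, i_\rho)$ at the module level give natural transformations $\mathrm{id}_\FF \Rightarrow i_\rho\, i$ and $i\, i_\rho \Rightarrow \mathrm{id}_{A\-\mod}$. Because $i$ is a fully faithful inclusion of a wide subcategory, the unit $\mathrm{id}_\FF \Rightarrow i_\rho\, i$ is an isomorphism; passing to derived categories, $i_\rho^*\, i^* \cong \mathrm{id}$ on $\D^\b(\FF)$, and this descends to the quotient. For the other composite, the counit $i\, i_\rho(N) \to N$ need not be an isomorphism on the nose, but its cone should be acyclic after killing injectives: concretely the mapping cone of the counit lies in $\K^\b(A\-\inj)$, so that $i^*\, i_\rho^* \cong \mathrm{id}$ in the quotient $\D^\b(A\-\mod)/\K^\b(A\-\inj)$. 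I would verify this cone statement by reducing to indecomposable $A$-modules and splitting them according to their behavior under $i_\rho$, using Lemma~\ref{lem3.9} to control syzygies and the injective-resolution data from Lemma~\ref{lem3.5(1)}.

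\medskip

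The main obstacle I expect is the last point: showing that the counit $i\,i_\rho(N)\to N$ has cone in $\K^\b(A\-\inj)$, equivalently that it becomes an isomorphism in the quotient. This requires understanding precisely which part of an arbitrary $A$-module is ``seen'' by the adjoint $i_\rho = \Hom_A(P,-)$, namely the cyclic part, and checking that the cokernel and kernel of the counit are built from noncyclic simple modules, which have finite injective dimension by Lemma~\ref{lem2.5} and hence lie in $\K^\b(A\-\inj)$ after taking injective resolutions. Once this is in place, the identification $\D^\b(A\-\mod)/\K^\b(A\-\inj) \simeq \D^\b(\FF)/\K^\b(\add\PP_c)$ follows formally, and one reads off that $i^*$ and $i_\rho^*$ descend to mutually inverse triangle equivalences as claimed.
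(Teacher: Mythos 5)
Your overall architecture (descend both functors to the quotients, then check that the unit and counit become invertible there) is in substance the same as the paper's, which packages the identical ingredients through the Gabriel--Zisman localization theorem and Verdier's quotient-of-quotients corollary instead of an explicit unit/counit argument. However, your verification of the compatibility on subcategories contains a genuine error: you claim, twice, that the modules in $\PP_c$ are injective as $A$-modules (``projective-injective''), citing Lemma~\ref{lem3.5(1)}. That lemma asserts only that each $P(S)$ with $S$ cyclic has \emph{finite injective dimension}, not that it is injective, and injectivity fails in general. The paper's own example in Section~5 is a counterexample: for the Nakayama algebra with admissible sequence $(7,6,6,5)$ the cyclic simples are $S_1$ and $S_4$, and $P(S_4)$, of length $5$, is isomorphic to $\rad P(S_3)$, hence embeds properly into $P(S_3)$, which has the same socle $S_4$; so $P(S_4)\in\PP_c$ is not injective. (Your parenthetical justification also fails there: $\soc P(S_1)=S_3$ is noncyclic.) This error is harmless for the containment $i^*(\K^\b(\add\PP_c))\subseteq\K^\b(A\-\inj)$, since finite injective dimension is all that is needed; but it is fatal to your proposed proof that $i_\rho=\Hom_A(P,-)$ carries $A\-\inj$ into $\add\PP_c$, which you base on computing $\Hom_A(P,I)$ ``using that each $P(S)$ is itself injective.'' The correct argument is formal: $i_\rho$ preserves injective objects because its left adjoint $i$ is exact, and the injective objects of $\FF\simeq A'\-\mod$ coincide with $\add\PP_c$ because $A'$ is selfinjective (Proposition~\ref{prop3.7}).

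One further inaccuracy: for the counit step you propose to use Lemma~\ref{lem3.9}, which concerns syzygies and is not the relevant tool. What one actually checks (and what the paper checks) is the equality $\Ker i_\rho=\FF(\SS_{nc})$: a module is killed by $\Hom_A(P,-)$ precisely when all its composition factors are noncyclic. Since $i_\rho$ is exact and the unit is invertible, the kernel and cokernel of the counit $i\,i_\rho(N)\to N$ lie in $\FF(\SS_{nc})$, so the cone of the derived counit lies in $\thick\langle\SS_{nc}\rangle\subseteq\K^\b(A\-\inj)$ by Lemma~\ref{lem2.5}. With these two repairs your proof goes through and coincides, up to packaging, with the paper's.
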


\begin{proof}
Observe by \cite[Lemma 3.3.1]{CK2011} that $i^*: \D^\b(\FF) \to \D^\b(A\-\mod)$ is fully faithful. It follows that its right adjoint $i_\rho^*$ induces a triangle equivalence
\[\overline{i_\rho^*}: \D^\b(A\-\mod)/\Ker i_\rho^* \simeq \D^\b(\FF);\]
see \cite[Chapter I, Section 1, 1.3 Proposition]{GZ1967}. Here, $\Ker F$ denotes the essential kernel of an additive functor $F$.

We claim that $\Ker i_\rho^* = \thick\langle\SS_{nc}\rangle$, the smallest thick subcategory of $\D^\b(A\-\mod)$ containing $\SS_{nc}$. Here, we recall that $\SS_{nc}$ denotes the set of noncyclic simple $A$-modules. By Lemma~\ref{lem2.5} each noncyclic simple $A$-module has finite injective dimension. It follows from the claim that $\Ker i_\rho^* \subseteq \K^b(A\-\inj)$.

For the claim, we observe that $\Ker i_\rho = \FF(\SS_{nc})$, the full subcategory of $A\-\mod$ formed by $A$-modules with an $\SS_{nc}$-filtration. The claim follows from the fact that a complex $X$ is in $\Ker i_\rho^*$ if and only if each cohomology $H^i(X)$ is in $\Ker i_\rho$.

We observe that $i_\rho$ preserves injective objects since it has an exact left adjoint. It follows that $i_\rho^*(\K^\b(A\-\inj)) \subseteq \K^\b(\add \PP_c)$. By Lemma~\ref{lem3.5(1)} each module $Q$ in $\PP_c$ has finite injective dimension. Note that $i_\rho^*Q = Q$. Therefore $i_\rho^*(\K^\b(A\-\inj)) \supseteq \K^\b(\add \PP_c)$, and thus $i_\rho^*(\K^\b(A\-\inj)) = \K^\b(\add \PP_c)$. From this equality, the triangle equivalence $\overline{i_\rho^*}$ restricts to a triangle equivalence
\[\K^\b(A\-\inj)/\Ker i_\rho^* \simeq \K^\b(\add \PP_c).\]
The desired equivalence follows from \cite[Chapitre I, \S2, 4-3 Corollaire]{Ver1977}.
\end{proof}

\begin{prop}\label{prop4.2}
Let $A$ be a Nakayama algebra. Then the singularity category $\D_\sg(A)$ is triangle equivalent to $\D^\b(A\-\mod)/\K^\b(A\-\inj)$. Equivalently, there is a triangle duality between $\D_\sg(A)$ and $\D_\sg(A^\op)$.
\end{prop}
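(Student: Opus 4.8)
The proposition is asserted for an arbitrary Nakayama algebra, so the plan is first to reduce to a connected algebra of infinite global dimension and then to assemble the equivalences of the previous sections. Any Nakayama algebra is a finite product of connected ones, and both the singularity category and the quotient $\D^\b(A\-\mod)/\K^\b(A\-\inj)$ are compatible with such a product decomposition; hence it suffices to treat a connected $A$. If $A$ has finite global dimension, then every module has finite projective dimension, so $\D_\sg(A) = 0$, and every module also has finite injective dimension, so $\K^\b(A\-\inj) = \D^\b(A\-\mod)$ and the quotient vanishes too; thus both sides are zero and trivially equivalent. The substance therefore lies in the case where $A$ is connected of infinite global dimension, which is precisely the setting of Lemma~\ref{lem4.1}, Proposition~\ref{prop3.7} and Corollary~\ref{cor3.11}.

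In that case I would chain together several equivalences whose common value is the stable module category $A'\-\underline{\mod}$ of the connected selfinjective Nakayama algebra $A'$ produced in Proposition~\ref{prop3.7}. By Lemma~\ref{lem4.1}, the functors $i^*$ and $i_\rho^*$ give a triangle equivalence $\D^\b(A\-\mod)/\K^\b(A\-\inj) \simeq \D^\b(\FF)/\K^\b(\add\PP_c)$. Under the identification $\FF \simeq A'\-\mod$ of Proposition~\ref{prop3.7}, the subcategory $\add\PP_c$ of projective objects of $\FF$ corresponds to $A'\-\proj$, so that $\K^\b(\add\PP_c)$ corresponds to $\perf(A') = \K^\b(A'\-\proj)$; hence $\D^\b(\FF)/\K^\b(\add\PP_c) \simeq \D^\b(A'\-\mod)/\perf(A') = \D_\sg(A')$. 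Now $A'$ is selfinjective, so the classical description of the singularity category \cite{Buc1987, Hap1991} gives $\D_\sg(A') \simeq A'\-\underline{\mod}$. On the other hand, Corollary~\ref{cor3.11} already identifies $\D_\sg(A) \simeq A'\-\underline{\mod}$ for the same algebra $A'$. Concatenating these yields the desired triangle equivalence $\D_\sg(A) \simeq \D^\b(A\-\mod)/\K^\b(A\-\inj)$.

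The final clause is then immediate. As recalled in the remark preceding Lemma~\ref{lem4.1}, the usual duality on module categories identifies $\D^\b(A\-\mod)/\K^\b(A\-\inj)$ with the opposite category $\D_\sg(A^\op)^\op$. Composing with the equivalence just established produces a triangle equivalence $\D_\sg(A) \simeq \D_\sg(A^\op)^\op$, that is, a triangle duality between $\D_\sg(A)$ and $\D_\sg(A^\op)$.

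Since the heavy analysis has all been carried out in Section~3 and in Lemma~\ref{lem4.1}, the proof is largely a matter of bookkeeping. The one step deserving genuine attention is the middle passage: one must verify carefully that the Verdier quotient $\D^\b(\FF)/\K^\b(\add\PP_c)$ is honestly the singularity category $\D_\sg(A')$ --- this uses only that $\add\PP_c$ is exactly the class of projective objects of $\FF \simeq A'\-\mod$ --- and then that the selfinjectivity of $A'$, which is the crux of Proposition~\ref{prop3.7}, is what collapses $\D_\sg(A')$ onto the stable category $A'\-\underline{\mod}$, thereby matching it with $\D_\sg(A)$ via Corollary~\ref{cor3.11}. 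The reduction to the connected, infinite-global-dimension case is routine but should be stated explicitly, since the proposition is asserted for all Nakayama algebras.
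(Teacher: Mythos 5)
Your proof is correct and takes essentially the same route as the paper: reduce to a connected algebra of infinite global dimension, then combine Theorem~\ref{thm3.10} (via Corollary~\ref{cor3.11}), the Frobenius structure of $\FF$, and Lemma~\ref{lem4.1}. The only cosmetic difference is that where the paper applies Rickard's theorem directly to the Frobenius abelian category $\FF$ to obtain $\underline{\FF} \simeq \D^\b(\FF)/\K^\b(\add\PP_c)$, you pass through the identification $\FF \simeq A'\-\mod$ and invoke the Buchweitz--Happel description for the selfinjective algebra $A'$ --- the same ingredient in different clothing --- and you spell out the reduction that the paper compresses into ``without loss of generality''.
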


\begin{proof}
Without loss of generality, we may assume that $A$ is a connected Nakayama algebra of infinite global dimension. Then the singularity category $\D_\sg(A)$ is triangle equivalent to the stable category $\underline{\FF}$ by Theorem~\ref{thm3.10} .

Since $\FF$ is a Frobenius abelian category, it follows from \cite[Theorem~2.1]{Ric1989} that the stable category $\underline{\FF}$ is triangle equivalent to $\D^\b(\FF)/\K^\b(\add \PP_c)$. Then the conclusion follows from Lemma~\ref{lem4.1}.
\end{proof}

\section{The resolution quivers}
Let $A$ be a connected Nakayama algebra of infinite global dimension. Recall that $n(A)$ denotes the number of isomorphism classes of simple $A$-modules. By Corollary~\ref{cor3.11} the Auslander-Reiten quiver of the singularity category $\D_\sg(A)$ is isomorphic to a truncated tube $\Z\LA_m/\langle\tau^t\rangle$, where $m = m(A)$ denotes its height and $t = t(A)$ denotes its rank. Here, we use the fact that the Auslander-Reiten quiver of the stable module category of a connected selfinjective Nakayama algebra is a truncated tube; compare~\cite[VI.2]{ARS1995}.

Recall that $R(A)$ denotes the resolution quiver of $A$. We denote by $c(A)$ the number of cycles in $R(A)$. Let $C$ be a cycle in $R(A)$. Then the \emph{size} $s(C)$ of $C$ is the number of vertices in $C$, and the \emph{weight} $w(C)$ of $C$ is $\frac{\sum_Sl(P(S))}{n(A)}$, where $S$ runs though all vertices of $C$. Here, $l(P(S))$ is the composition length the projective cover $P(S)$ of a simple $A$-module $S$. Recall from~\cite{Shen2014} that all cycles in the resolution quiver $R(A)$ have the same size and the same weight. We denote $s(A) = s(C)$ and $w(A) = w(C)$ for an arbitrary cycle $C$ in $R(A)$.

For two positive integers $a$ and $b$, we denote their greatest common divisor by $(a,b)$.

\begin{lem}\label{lem5.1}
Let $m = m(A)$ and $t = t(A)$ be as above. Then $c(A) = (m+1,t)$, $s(A) = \frac{t}{(m+1,t)}$ and $w(A) = \frac{m+1}{(m+1,t)}$.
\end{lem}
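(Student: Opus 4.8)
The plan is to realize both the permutation $\gamma|_{\SS_c}$ of $\SS_c$ --- whose cycles are exactly the cycles of $R(A)$ --- and the bijection $\delta$ of Corollary~\ref{cor3.6} as powers of $\tau$, and then to identify $\gamma|_{\SS_c}$ with a power of the $t$-cycle $\delta$. First I would pin down the two tube invariants: the stable Auslander--Reiten quiver of a connected selfinjective Nakayama algebra with $n'$ simple modules and Loewy length $L'$ is the truncated tube $\Z\LA_{L'-1}/\langle\tau^{n'}\rangle$, so comparison with the paragraph preceding the lemma gives $t = n' = |\SS_c|$ and $m+1 = L'$. In particular $\delta$ is a single $t$-cycle by Corollary~\ref{cor3.6}, and every indecomposable projective object of $\FF(\EE_c)\simeq A'\-\mod$ has exactly $m+1$ elementary filtration factors. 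Since $A$ is connected of infinite global dimension its quiver is a cycle on $n(A)$ vertices, so $\tau$ acts as a single $n(A)$-cycle on $\SS$; uniseriality then gives $\soc E(S) = \tau^{l(E(S))-1}S$ and $\soc P(S) = \tau^{l(P(S))-1}S$, whence $\delta(S) = \tau^{l(E(S))}S$ and $\gamma(S) = \tau^{l(P(S))}S$ for $S\in\SS_c$.

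The crux is the identity $\gamma|_{\SS_c} = \delta^{m+1}$. Iterating the formula for $\delta$ yields $\delta^{j}(S) = \tau^{\sum_{i=0}^{j-1}l(E(\delta^{i}(S)))}S$. On the other hand, combining Lemma~\ref{lem3.5(2)} with the Nakayama structure of $A'$ shows that the $\EE_c$-filtration of $P(S)$ has factors $E(S), E(\delta(S)), \dots, E(\delta^{m}(S))$ from the top, so that $l(P(S)) = \sum_{i=0}^{m}l(E(\delta^{i}(S)))$; comparing exponents of $\tau$ gives $\gamma(S) = \delta^{m+1}(S)$. The first two formulas are then the elementary fact that a power $\sigma^{k}$ of a $t$-cycle $\sigma$ splits into $(k,t)$ cycles each of length $t/(k,t)$, applied with $\sigma = \delta$ and $k = m+1$; this also re-proves that all cycles of $R(A)$ share a common size.

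For the weight I would first record that $\sum_{T\in\SS_c}l(E(T)) = n(A)$: each $E(T)$ is a brick, so has $l(E(T))\le n(A)$ distinct composition factors, and by Corollary~\ref{cor3.6} the supports of the elementary modules partition $\SS$. Writing a cycle $C$ of $R(A)$ as $\{\delta^{(m+1)j}(S) : 0\le j\le s(A)-1\}$ and expanding each $l(P(\delta^{(m+1)j}(S))) = \sum_{i=0}^{m}l(E(\delta^{(m+1)j+i}(S)))$, the exponents run exactly through $0,1,\dots,(m+1)s(A)-1$. Since $(m+1)s(A) = \frac{m+1}{(m+1,t)}\cdot t$ amounts to $\frac{m+1}{(m+1,t)}$ full periods of $\delta$, the double sum equals $\frac{m+1}{(m+1,t)}\cdot n(A)$; dividing by $n(A)$ gives $w(A) = \frac{m+1}{(m+1,t)}$.

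The main obstacle I anticipate is the bookkeeping behind $\gamma|_{\SS_c} = \delta^{m+1}$. One must check that the exponents of $\tau$ accumulate correctly even though $\tau$ and $\delta$ do not commute as maps --- the point being that every term is a power of $\tau$ applied to the single module $S$ --- and that the elementary factors of $P(S)$ genuinely appear in the order $E(\delta^{i}(S))$. The identification $m+1 = L'$, $t = n'$ is routine but should be stated carefully, since the entire computation rests on the projective length $m+1$ being constant over $\SS_c$, which is exactly the selfinjectivity of $A'$ from Proposition~\ref{prop3.7}.
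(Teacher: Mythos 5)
Your proof is correct, and it takes a genuinely different route from the paper's. The paper reduces everything to the selfinjective case: by \cite[Theorem~3.8]{CY2013} there is a chain of homomorphisms $A = A_0 \to \cdots \to A_r$ with $A_r$ selfinjective, each inducing a triangle equivalence of singularity categories, and by \cite[Lemma~2.2]{Shen2014} each also induces a bijection between cycles of the resolution quivers preserving sizes and weights; hence $m$, $t$, $c$, $s$, $w$ are constant along the chain, and it remains to compute $\gamma(S_i) = S_{i+m+1}$, i.e.\ $\gamma = \tau^{m+1}$, for a connected selfinjective Nakayama algebra. You instead stay with $A$ itself and prove the identity $\gamma|_{\SS_c} = \delta^{m+1}$ directly from the Section~3 machinery: $\delta$ is a single $t$-cycle by Corollary~\ref{cor3.6}, $\tau'E(S) = E(\delta(S))$ by Lemma~\ref{lem3.5(2)} (as in the proof of Proposition~\ref{prop3.7}), so the $\EE_c$-filtration of $P(S)$ has the $m+1$ factors $E(\delta^i(S))$, $0 \le i \le m$, and writing both $\gamma$ and $\delta$ as powers of the $n(A)$-cycle $\tau$ makes the exponents add up correctly. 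Note that your identity specializes to the paper's computation: for $A$ selfinjective every simple module is cyclic, $E(S) = S$ and $\delta = \tau$, so your argument subsumes the paper's final step and renders the reduction unnecessary. The trade-off is clear: the paper's proof is shorter but leans on two external results (one of which, \cite{Shen2014}, the paper in any case needs in order to know that $s(A)$ and $w(A)$ are well defined), whereas yours is self-contained given Section~3, re-proves along the way that all cycles have the same size and the same weight (via the cycle structure of a power of a $t$-cycle, and via the identity $\sum_{T\in\SS_c} l(E(T)) = n(A)$, which you correctly deduce from the elementary modules being bricks with pairwise disjoint supports covering $\SS$), and yields the structural formula $\gamma|_{\SS_c} = \delta^{m+1}$, which is of independent interest.
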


\begin{proof}
Recall from \cite[Theorem~3.8]{CY2013} that there exists a sequence of algebra homomorphisms
\[A = A_0 \To{\eta_0} A_1 \To{\eta_1} A_2 \To{} \cdots \To{} A_{r-1} \To{\eta_{r-1}} A_r\]
such that each $A_i$ is a connected Nakayama algebra and $A_r$ is selfinjective; moreover, each $\eta_i$ induces a triangle equivalence between $\D_\sg(A_i)$ and $\D_\sg(A_{i+1})$. Following \cite[Lemma~2.2]{Shen2014}, each $\eta_i$ induces a bijection between the set of cycles in $R(A_i)$ and the set of cycles in $R(A_{i+1})$, which preserves sizes and weights. Then we have $m(A_i) = m(A_{i+1})$, $t(A_i) = t(A_{i+1})$, $c(A_i) = c(A_{i+1})$, $s(A_i) = s(A_{i+1})$ and $w(A_i) = w(A_{i+1})$ for $0 \leq i \leq r-1$. Therefore it is enough to prove the equations for selfinjective Nakayama algebras.

Let $A$ be a connected selfinjective Nakayama algebra. Then $t$ equals the number of isomorphism classes of simple $A$-modules, and $m+1$ equals the radical length of $A$. We claim that $s(A) = \frac{t}{(m+1,t)}$. Therefore, we have $c(A) = \frac{t}{s(A)} = (m+1,t)$ and $w(A) = \frac{(m+1)s(A)}{t} = \frac{m+1}{(m+1,t)}$.

For the claim, let $\{S_1, \cdots, S_t\}$ be a complete set of pairwise non-isomorphic simple $A$-modules such that $\tau S_i = S_{i+1}$ for $1 \leq i \leq t$. Here, we let $S_{t+j} = S_j$ for each $j > 0$. Then we have $\gamma(S_i) = S_{i+m+1}$, and thus $\gamma^d(S_i) = S_i$ if and only if $t$ divides $d(m+1)$. It follows that $R(A)$ consists of cycles of size $\frac{t}{(m+1,t)}$.
\end{proof}

The following result establishes the relationship between the resolution quiver of a Nakayama algebra and the resolution quiver of its opposite algebra.

\begin{prop}\label{prop5.2}
Let $A$ be a connected Nakayama algebra of infinite global dimension. Then the following statements hold.
\begin{enumerate}
  \item The resolution quivers $R(A)$ and $R(A^\op)$ have the same number of cycles and the same number of cyclic vertices.
  \item All cycles in $R(A)$ and $R(A^\op)$ have the same weight.
\end{enumerate}
\end{prop}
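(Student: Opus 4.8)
The plan is to reduce both statements to the two parameters $m = m(A)$ and $t = t(A)$ that determine the truncated tube $\Z\LA_m/\langle\tau^t\rangle$, and then to show that these parameters are unchanged under passage to the opposite algebra. First I note that $A^\op$ is again a connected Nakayama algebra of infinite global dimension, since connectedness, the Nakayama property, and the global dimension are all preserved under taking opposites; hence Lemma~\ref{lem5.1} applies equally to $A^\op$. By that lemma, $c(A) = (m+1,t)$ and $w(A) = \frac{m+1}{(m+1,t)}$, while the number of cyclic vertices of $R(A)$ equals $c(A)\cdot s(A) = (m+1,t)\cdot \frac{t}{(m+1,t)} = t$. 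Thus, once we know that $m(A^\op) = m(A)$ and $t(A^\op) = t(A)$, both statements follow immediately by comparing these formulas for $A$ and for $A^\op$.

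To prove $m(A^\op) = m(A)$ and $t(A^\op) = t(A)$, I would use the triangle duality between $\D_\sg(A)$ and $\D_\sg(A^\op)$ provided by Proposition~\ref{prop4.2}. Being a contravariant triangle equivalence, this duality induces an isomorphism from the Auslander--Reiten quiver of $\D_\sg(A)$ onto that of $\D_\sg(A^\op)$ which reverses every arrow and interchanges the Auslander--Reiten translation $\tau$ with its inverse $\tau^{-1}$. Since the first quiver is the truncated tube $\Z\LA_m/\langle\tau^t\rangle$, the second is obtained from it by this arrow-reversal together with the inversion of $\tau$.

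The crux is that this operation sends $\Z\LA_m/\langle\tau^t\rangle$ to a truncated tube of the same height $m$ and the same rank $t$: the linear quiver $\LA_m$ is isomorphic to its opposite, so $\Z\LA_m$ is invariant under arrow-reversal as a translation quiver, and passing from $\tau$ to $\tau^{-1}$ does not alter the period $t$ in the relation $\langle\tau^t\rangle$. Hence $m(A^\op) = m(A)$ and $t(A^\op) = t(A)$, and the proposition follows by the substitution described above. I expect the main obstacle to lie precisely in verifying carefully that a triangle duality acts on Auslander--Reiten quivers exactly by reversing arrows and inverting $\tau$, and that this combined operation preserves both the height and the rank of a truncated tube; once this structural fact is secured, the remainder is a routine substitution into Lemma~\ref{lem5.1}.
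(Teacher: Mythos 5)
Your proposal is correct and follows essentially the same route as the paper: invoke the triangle duality of Proposition~\ref{prop4.2}, observe that the Auslander--Reiten quiver of $\D_\sg(A^\op)$ is the opposite quiver of that of $\D_\sg(A)$, hence a truncated tube with the same $m$ and $t$, and conclude via Lemma~\ref{lem5.1}. You merely spell out two details the paper leaves implicit, namely that the opposite of a truncated tube $\Z\LA_m/\langle\tau^t\rangle$ has the same height and rank, and that the number of cyclic vertices equals $c(A)\cdot s(A)=t$.
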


\begin{proof}
By Proposition~\ref{prop4.2} there is a triangle duality between $\D_\sg(A)$ and $\D_\sg(A^\op)$. Then the Auslander-Reiten quiver of $\D_\sg(A^\op)$ is isomorphic to the opposite quiver of the Auslander-Reiten quiver of $\D_\sg(A)$. Therefore, we have $m(A) = m(A^\op)$ and $t(A) = t(A^\op)$. It follows from Lemma~\ref{lem5.1} that $c(A) = c(A^\op)$, $s(A) = s(A^\op)$ and $w(A) = w(A^\op)$.
\end{proof}

The following example shows that these two resolution quivers $R(A)$ and $R(A^\op)$ may not be isomorphic in general.

\begin{exm}
Let $A$ be a connected Nakayama algebra with admissible sequence $(7,6,6,5)$. Assume that $\{S_1, S_2, S_3, S_4\}$ is a complete set of pairwise non-isomorphic simple $A$-modules such that $\tau S_i = S_{i+1}$ for $1 \leq i \leq 4$. Then we have $l(P_1) = 7$, $l(P_2) = 6$, $l(P_3) = 6$ and $l(P_4) = 5$. There is an arrow from $S_i$ to $S_j$ in $R(A)$ if and only if $4$ divides $i-j+l(P_i)$.

Denote by $D$ the usual duality, and by $(-)^*$ the duality on projectives. Then $\{DS_4, DS_3,DS_2,DS_1\}$ is a complete set of pairwise non-isomorphic simple $A^\op$-modules such that $\tau' DS_i = DS_{i-1}$. Here, $\tau'$ is the Auslander-Reiten translation of $A^\op$. We observe that $l(P_4^*) = 6$, $l(P_3^*) = 7$, $l(P_2^*) = 6$ and $l(P_1^*) = 5$. Therefore the admissible sequence of $A^\op$ is $(6,7,6,5)$. There is an arrow from $DS_i$ to $DS_j$ in $R(A^\op)$ if and only if $4$ divides $i-j-l(P_i^*)$.

The resolution quivers $R(A)$ and $R(A^\op)$ are shown as follows.
\[\begin{aligned}[c]
\xymatrix@R=10pt{
S_3 \ar@{->}[r] &S_1 \ar@/^/[r] &S_4 \ar@/^/[l]\ar@{<-}[r] &S_2}
\end{aligned}
,\qquad
\begin{aligned}[c]
\xymatrix@R=10pt{
DS_1 \ar@{->}[dr] \\
&DS_4 \ar@/^/[r] &DS_2 \ar@/^/[l] \\
DS_3 \ar@{->}[ur]}
\end{aligned}.\]
\end{exm}

\section*{Acknowledgements}
The author is grateful to his supervisor Professor Xiao-Wu Chen for his guidance.


\end{document}